\documentclass[11pt]{article}
\usepackage[latin1]{inputenc}
\usepackage[T1]{fontenc}
\usepackage[english]{babel}
\usepackage{amsfonts} 
\usepackage{amsmath}
\usepackage{amsthm}
\usepackage{amssymb}
\usepackage{stmaryrd}
\usepackage{calc}
\usepackage{graphicx}
\usepackage[colorlinks=true,linkcolor=blue,urlcolor=blue,citecolor=blue]{hyperref}
\usepackage{fullpage}

\newtheorem{theo}{Theorem}    

\newtheorem{lem}{Lemma}

\newcommand{\rr}{\mathbb{R}}
\newcommand{\cc}{\mathbb{C}}

\newcommand{\dis}{\displaystyle}
\newcommand{\re}{{\rm{Re}}}
\newcommand{\im}{{\rm{Im}}}
\newcommand{\diff}{{\rm{d}}}
\newcommand{\id}{{\rm{id}}}
\newcommand{\pa}{\partial}

\newcounter{rmq}
\setcounter{rmq}{0}

\addtolength{\headheight}{1.5pt}

\title{A bifurcation analysis for the Lugiato--Lefever equation}
\author{Cyril Godey\\{\small Laboratoire de Math\'ematiques de Besan\c{c}on, Universit\'e de Franche-Comt\'e,}\\{\small 16 route de Gray, 25030 Besan\c{c}on Cedex, France}\\{\small cyril.godey@univ-fcomte.fr}}

\date{}

\begin{document}

\maketitle

\begin{abstract}
The Lugiato--Lefever equation is a cubic nonlinear Schr{\"o}dinger equation, including damping, detuning and driving, which arises as a model in nonlinear optics. We study the existence of stationary waves which are found as solutions of a four-dimensional reversible dynamical system in which the evolutionary variable is the space variable. Relying upon tools from
bifurcation theory and normal forms theory, we discuss the codimension 1 bifurcations. We prove the existence of various types of steady solutions, including spatially localized, periodic, or quasi-periodic solutions.
\smallskip

\noindent \textbf{Keywords.} Lugiato--Lefever equation, bifurcation, spatial dynamics, normal form, stationary solutions.
\end{abstract}

\section{Introduction}

We consider the Lugiato--Lefever equation
\begin{equation}
\frac{\pa \psi}{\pa t}=-i\beta \frac{\pa^2\psi}{\pa x^2}-(1+i\alpha)\psi +i \psi \left|\psi\right|^2+F, \label{lle}
\end{equation}
where the unknown function $\psi$ is complex valued and depends upon the time $t$ and the space variable~$x$. The coefficient $\alpha \in \rr$ is a detuning parameter, $F>0$ is a driving term and $\beta \in \rr^*$ denotes a dispersion parameter, which can be supposed to be equal to $\pm 1$, up to rescaling $x$. In the sequel we refer to the case $\beta=1$ (respectively $\beta=-1$) as the normal (respectively anomalous) dispersion case \cite{godey2014stability}. 

This equation has been derived as a model in several contexts in nonlinear optics \cite{chembo2013spatiotemporal, lugiato1987spatial} and has been intensively studied in the physics literature in the recent years (\textit{e.g.}, see \cite{godey2014stability} and the references therein). In contrast, there are few mathematical results. The constant solutions of the equation~(\ref{lle}) and their temporal stability are well-known (see for example \cite{godey2014stability} and the references therein). In \cite{miyaji2010bifurcation}, using a spatial dynamics approach and tools from bifurcation theory, it has been proved that for $\beta=-1$, the equation (\ref{lle}) possesses several types of stationary solutions, bifurcating from constant solutions. Using a similar formulation, a local bifurcation analysis for stationary waves has been performed in \cite{godey2014stability} in the cases $\beta=1$ and $\beta=-1$. The existence of $2\pi-$periodic stationary solutions satisfying Neumann boundary conditions has been proved in \cite{mandel2016}, provided that the coefficients $\alpha$ and $F$ belong to suitable ranges. Moreover, the stability of a family of periodic solutions has been discussed in \cite{miyaji2011stability}.

In this paper, we present a systematic study of the local bifurcations found in \cite{godey2014stability} and show the existence of various steady solutions of (\ref{lle}), \textit{i.e.}, solutions of the stationary equation
\begin{equation}
\beta \frac{\diff ^2 \psi}{\diff x^2}=(i-\alpha)\psi+\psi |\psi|^2-iF. \label{llestat}
\end{equation} 
Our purpose is to confirm the existence of solutions of the equation (\ref{lle}), which are found numerically and experimentally \cite{godey2014stability}. Our analysis of (\ref{llestat}) relies upon a spatial dynamics approach, which consists in writing the equation (\ref{llestat}) as a dynamical system in which the unbounded space variable $x$ is taken as an evolutionary variable. The existence problem is then analyzed using tools from dynamical systems theory and bifurcation theory. This method has been introduced by K. Kirchg\"assner \cite{MR662490} and has been extensively used to study the existence of traveling waves in many different contexts (\textit{e.g.}, see \cite[Chapter 5]{haragus2010local}). Starting from the formulation of the stationary equation (\ref{llestat}) as a reversible dynamical system (see equation (\ref{syst}) below), the local bifurcations from constant solutions have been classified in \cite{godey2014stability}. Depending on the values of the parameters $\alpha$ and $F$, there are three codimension 1 bifurcations: $(i\omega)^2$, $0^2(i\omega)$ and $0^2$ and one codimension 2 bifurcation, which is an $0^4$ bifurcation. Here we discuss the set of bounded solutions which arise due to these bifurcations. We restrict to the codimension 1 bifurcations, which are well understood \cite[Chapter 4]{haragus2010local}. 
The key step in the analysis of the $(i\omega)^2$ and $0^2(i\omega)$ bifurcations is the computation of their normal form, which consists in finding a simpler form of the Taylor expansion of the nonlinear part of the system.~The computation of the relevant parameters in these expansions, together with the reversibility of the system (\ref{syst}), allows to conclude to the existence of solutions from the general results of \cite[Chapter 4]{haragus2010local}. A center manifold reduction is required to obtain the normal form of the $0^2$ bifurcation. This method allows to reduce the dimension of the system and to find a locally invariant manifold, containing the set of small bounded solutions.

The paper is organized as follows. In Section \ref{s1} we describe the set of constant solutions of the equation (\ref{lle}) and recall some results of the bifurcation analysis in \cite{godey2014stability}. In Sections \ref{s2}, \ref{s3} and \ref{s4} we compute the normal forms of the codimension 1 bifurcations and discuss the existence of several types of steady solutions of the equation (\ref{lle}). We also compute the leading order term in the Taylor expansions of the periodic and homoclinic solutions. We conclude with a discussion of some further issues. 

\noindent\textbf{Acknowledgements. }This work was supported by a doctoral grant of the Franche-Comt\'e region and the LabEx ACTION (project AMELL).

\section{Spatial dynamics and bifurcation diagrams} \label{s1}
In this section we recall the main results of the bifurcation analysis performed in \cite{godey2014stability}.

\vspace{0.3cm}
\noindent\textbf{Constant solutions. }Let $\psi =\psi_r + i \psi_i $ be a constant solution of (\ref{lle}), where $\psi_r$ and $\psi_i $ denote the real and imaginary part of $\psi$, respectively. Then $\psi$ satisfies the algebraic equation
\[
i\psi |\psi|^2-(1+i\alpha)\psi+F=0 .
\]
Notice that $\psi$ does not depend upon $\beta$. A straightforward computation shows that $\psi_r$ and $\psi_i$ are given implicitly by
\[
\psi_r=\frac{F}{1+(\rho-\alpha)^2},\qquad \psi_i=\frac{F(\rho-\alpha)}{1+(\rho-\alpha)^2}, 
\]
where $\rho=|\psi|^2$. In particular, we have
\[
F^2=\rho\left(1+(\rho-\alpha)^2\right).\label{polynome}
\]
This relation relates the parameters $\alpha$, $F^2$ and the square modulus $\rho$ of a constant solution, and allows to determine the number of equilibria of (\ref{lle}) in terms of the parameters $\alpha$ and $F^2$. 

More precisely, for $\alpha \leqslant \sqrt 3$, the cubic polynomial $\rho \longmapsto \rho\left(1+(\rho-\alpha)^2\right)$ is monotonically increasing for $\rho>0$, so that the equation (\ref{polynome}) has precisely one solution, and therefore the equation (\ref{lle}) has one constant solution, for any $F>0$. For $\alpha>\sqrt 3$, the polynomial $\rho \longmapsto \rho\left(1+(\rho-\alpha)^2\right)$ has two positive critical points, which are the roots of the quadratic polynomial $\rho\longmapsto 3\rho^2-4\alpha \rho+\alpha^2+1$,
\[
\rho_{\pm}(\alpha)=\frac{2\alpha\mp\sqrt{\alpha^2-3}}{3}.
\]
(see Figure \ref{monot}). The corresponding values $F^2_{\pm}(\alpha)$ of $F^2$ are then given by
%the equation possesses one equilibrium, and for $\alpha > \sqrt 3$, there exists a range $[F_-^2(\alpha),F_+^2(\alpha)]$, such that for $F^2\in [F_-^2(\alpha),F_+^2(\alpha)]$, the equation (\ref{lle}) possesses three equilibria, et one equilibrium when $F^2\notin [F_-^2(\alpha),F_+^2(\alpha)]$. The critical values $F_{\pm}^2(\alpha)$ are given by
\begin{eqnarray*}
F^2_{\pm}(\alpha)&=&\rho_{\pm}(\alpha)(1+(\rho_{\pm}(\alpha)-\alpha)^2)\\
&=&\frac{2\alpha\mp \sqrt{\alpha^2-3}}{3}\left(1+\left(\frac{\sqrt{\alpha^2-3}\pm\alpha}{3}\right)^2\right).
\end{eqnarray*}

\begin{figure}
\centering
\includegraphics[scale=0.5]{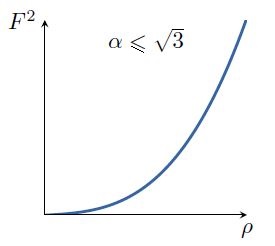}
\includegraphics[scale=0.5]{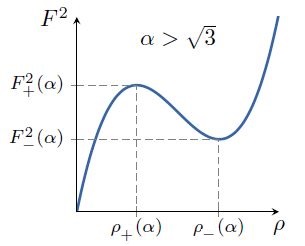}
\caption{Monotonicity of the polynomial $\rho\longmapsto \rho\left(1+(\rho-\alpha)^2\right)$ for $\alpha\leqslant \sqrt 3$ (to the left) and $\alpha > \sqrt 3$ (to the right).}
\label{monot}
\end{figure}
\noindent Consequently, for any $\alpha > \sqrt 3$ and any $F^2\in [F_-^2(\alpha),F_+^2(\alpha)]$, the equation (\ref{polynome}) has three solutions, so that the equation (\ref{lle}) has three constant solutions $\psi_1$, $\psi_2$ and $\psi_3$. Their respective modulus $\rho_1$, $\rho_2$ and $\rho_3$ satisfy
\[
\rho_1<\rho_+(\alpha)<\rho_2<\rho_-(\alpha)<\rho_3.
\]
\begin{figure}
\includegraphics[scale=0.22]{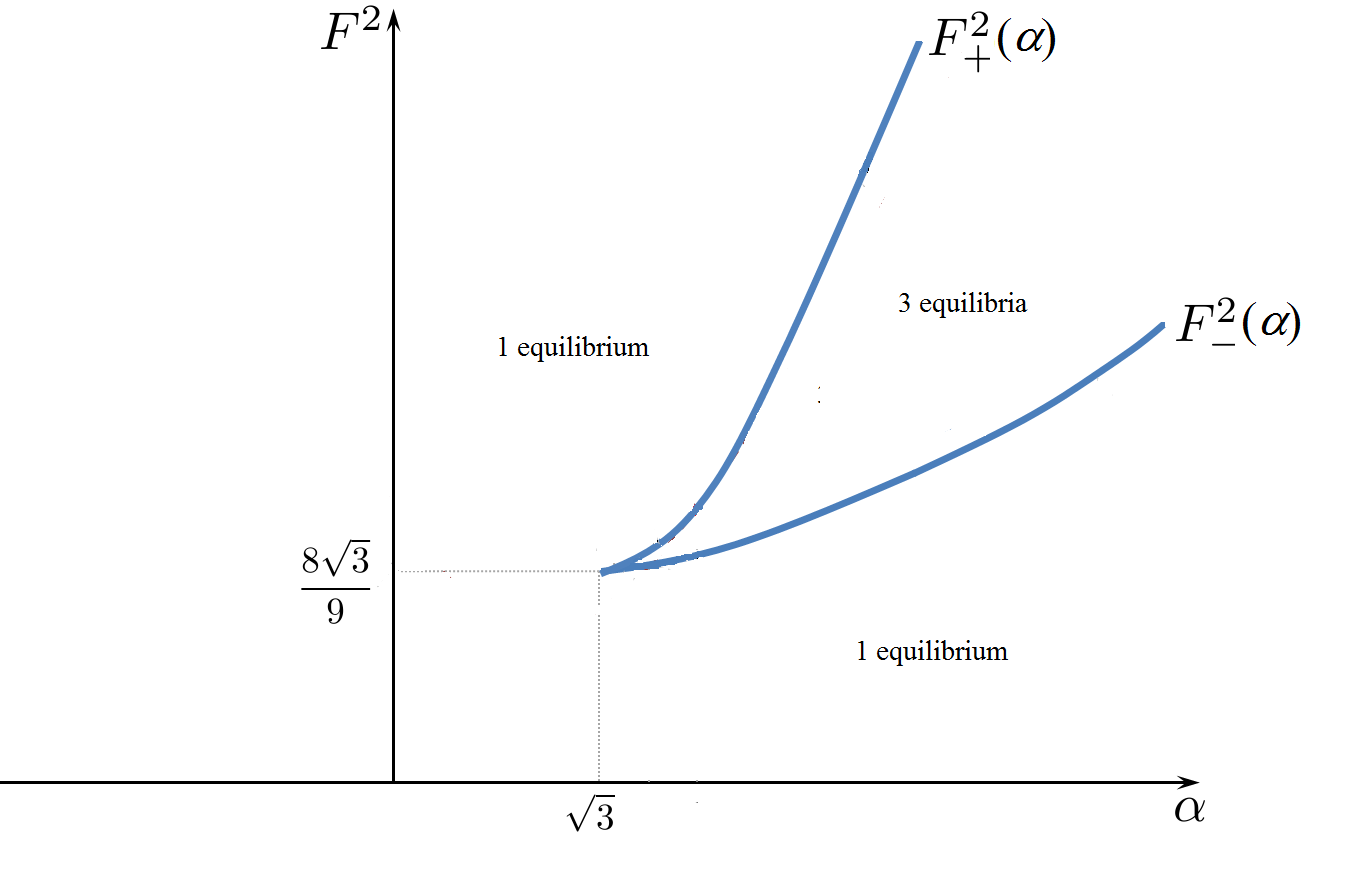}\centering
\caption{Number of equilibria of the equation (\ref{lle}) in terms of $\alpha$ and $F^2$.}
\label{numeq}
\end{figure}
\noindent Moreover, for $F^2\notin [F_-^2(\alpha),F_+^2(\alpha)]$, the equation (\ref{lle}) has one constant solution. We sum up these results in Figure \ref{numeq}.

\vspace{0.3cm}
\noindent\textbf{Dynamical system. }Let $\psi^*=\psi_r ^*+i\psi_i ^*$ be a constant solution of (\ref{lle}), and let $\alpha^*, \ F^*$ be the corresponding values of the parameters. We set
\[
\psi=\psi^*+\widetilde{\psi}, \qquad \widetilde{\psi}=\widetilde{\psi_r}+i\widetilde{\psi_i},\qquad \frac{\diff \widetilde{\psi}}{\diff x}=\widetilde{\varphi_r}+i\widetilde{\varphi_i},
\]
and we rewrite the stationary equation (\ref{llestat}) as a dynamical system in $\rr^4$,
\begin{equation}
\frac{\diff U}{\diff x}=L_{\pm}U+R_{\pm}(U,\alpha,F),\label{syst}
\end{equation}
in which
\[ U=\begin{pmatrix}
\widetilde{\psi_r}\\ \widetilde{\varphi_r} \\ \widetilde{\psi_i}\\ \widetilde{\varphi_i}
\end{pmatrix},\qquad L_{\pm}=\begin{pmatrix}
0 & 1 & 0 & 0 \\
\pm\left(3\psi_r^{*2}+\psi_i^{*2}-\alpha^*\right) & 0 & \pm\left(2\psi_r^* \psi_i ^*-1 \right)& 0 \\
0 & 0 & 0 & 1 \\
\pm\left(2\psi_r^*\psi_i^*+1\right) & 0 & \pm\left(\psi_r^{*2}+3\psi_i^{*2}-\alpha^*\right) & 0
\end{pmatrix},
\]
and
\[
R_{\pm}(U,\alpha,F)=\pm \begin{pmatrix}
0\\ \widetilde{\psi_r}^3+\widetilde{\psi_r}\widetilde{\psi_i}^2+3\psi_r^* \widetilde{\psi_r}^2+2\psi_i^* \widetilde{\psi_r}\widetilde{\psi_i}+\psi_r^*\widetilde{\psi_i}^2-(\alpha-\alpha^*)(\widetilde{\psi_r}+\psi_r^*) \\ 0 \\
\widetilde{\psi_i}^3+\widetilde{\psi_r}^2 \widetilde{\psi_i}+\psi_i^* \widetilde{\psi_r}^2+2\psi_r^* \widetilde{\psi_r}\widetilde{\psi_i}+3\psi_i^*\widetilde{\psi_i}^2-(\alpha-\alpha^*)(\widetilde{\psi_i}+\psi_i^*)-(F-F^*)
\end{pmatrix}.
\]
Here the symbols $+$ and $-$ in $L_{\pm}$ and $R_{\pm}$ stand for the case of normal ($\beta=1$) and anomalous ($\beta=-1$) dispersion, respectively. 

Remark that the system (\ref{syst}) is reversible, \textit{i.e.}, there exists a symmetry $S=\rm{diag }(1,-1,1,-1)$ which anticommutes with the vector field in (\ref{syst}):
\[
L_{\pm}SU=-SL_{\pm}U,\quad R_{\pm}(SU,\alpha,F)=-SR_{\pm}(U,\alpha,F),\quad \forall \  U\in \rr^4,\ \forall \ (\alpha,F) \in \rr\times (0,+\infty).
\]
\textbf{Local bifurcations. }Local bifurcations are determined by the purely imaginary eigenvalues of the matrix $L_{\pm}$. We denote by $\sigma_{\rm{im}}(L_\pm)$ the set of purely imaginary eigenvalues of $L_\pm$. A direct calculation shows that the eigenvalues of $L_{\pm}$ are the complex roots $X$ of the characteristic polynomial
\[
X^4\mp(4\rho^*-2\alpha^{*2})X^2+3\rho^{*2}-4\alpha^* \rho^*+\alpha^{*2}+1,\quad \rho^*=\psi_r^{*2}+\psi_i^{*2}.
\]
Notice that 0 is a root of this polynomial if and only if
\[
3\rho^{*2}-4\alpha^* \rho^*+\alpha^{*2}+1=0,
\]
\textit{i.e.}, if and only if
$\rho^*=\rho_{\pm}(\alpha^*)$, when $F^{*2}=F_{\pm}^2(\alpha^*)$. In this case $0$ is at least a double eigenvalue of $L_{\pm}$ and the other two eigenvalues of $L_{\pm}$ satisfy
$$X^2=\pm (4\rho^*-2\alpha^*).$$

Restricting to purely imaginary eigenvalues of $L_{\pm}$, it was shown in \cite{godey2014stability} that these exist for values of the parameters $\alpha^*$ and $F^*$ along the curves given in Figure \ref{diagram}. We have four types of bifurcations: 
\begin{itemize}
\item an $(i\omega)^2$ bifurcation when $\sigma_{\rm{im}}(L_{\pm})=\left\lbrace \pm i \omega \right\rbrace$, where $\pm i\omega$ are algebraically double and geometrically simple eigenvalues. In the case of normal dispersion ($\beta=1$), this bifurcation occurs for $\alpha^*>2$ and $F^{*2}=1+(1-\alpha^*)^2$, when $\rho^*=1$. In the case of anomalous dispersion ($\beta=-1$), it occurs for $\alpha^*<2$, $F^{*2}=1+(1-\alpha^*)^2$ and $\rho^*=1$.
\item a $0^2(i\omega)$ bifurcation when $\sigma_{\rm{im}}(L_{\pm})=\left\lbrace0,\pm i\omega \right\rbrace$, where 0 is an algebraically double and geometrically simple eigenvalue. In the case of normal dispersion, this bifurcation occurs for $\alpha ^*>2$, and $F^{*2}=F^2_+(\alpha^*)$, when $\rho^*=\rho_+(\alpha^*)$. In the case of anomalous dispersion, it occurs for $\alpha^* \in (\sqrt 3,2)$ $F^{*2}=F^2_+(\alpha^*)$, and $\rho^*=\rho_+(\alpha^*)$, and for $\alpha^* >\sqrt 3$, $F^{*2}=F^2_-(\alpha^*)$ and $\rho^*=\rho_-(\alpha^*)$.
\item a $0^2$ bifurcation when $\sigma_{\rm{im}}(L_{\pm})=\left\lbrace 0 \right\rbrace$, where 0 is an algebraically double and geometrically simple eigenvalue. In the case of normal dispersion, it occurs for $\alpha^* \in (\sqrt 3,2)$ and $F^{*2}=F^2_+(\alpha^*)$, when $\rho^*=\rho_+(\alpha^*)$, and for $\alpha^* >\sqrt 3$, $F^{*2}=F^2_-(\alpha^*)$, when $\rho^*=\rho_-(\alpha^*)$.  In the case of anomalous dispersion, this bifurcation occurs for $\alpha^* >2$, $F^{*2}=F^2_+(\alpha^*)$ and $\rho^*=\rho_+(\alpha^*)$. 
\item a $0^4$ bifurcation when $\sigma_{\rm{im}}(L_{\pm})=\left\lbrace 0 \right\rbrace$, where 0 is an algebraically quadruple and geometrically simple eigenvalue. In both cases, this bifurcation occurs for $\alpha^*=F^{*2}=2$, when $\rho^*=1$.
\end{itemize}

\noindent\textbf{Remark 1. }The $0^2(i\omega)$ and $0^2$ bifurcations are also found for $\alpha^*=\sqrt 3$, but at this point one of the leading order coefficient of the normal form system vanishes (see Sections \ref{o2iomega2} and \ref{o21}). The analysis of these bifurcations at $\alpha^*=\sqrt 3$ would require a further expansion of the vector field.

\begin{figure}[!h]
%\center
\includegraphics[scale=0.23]{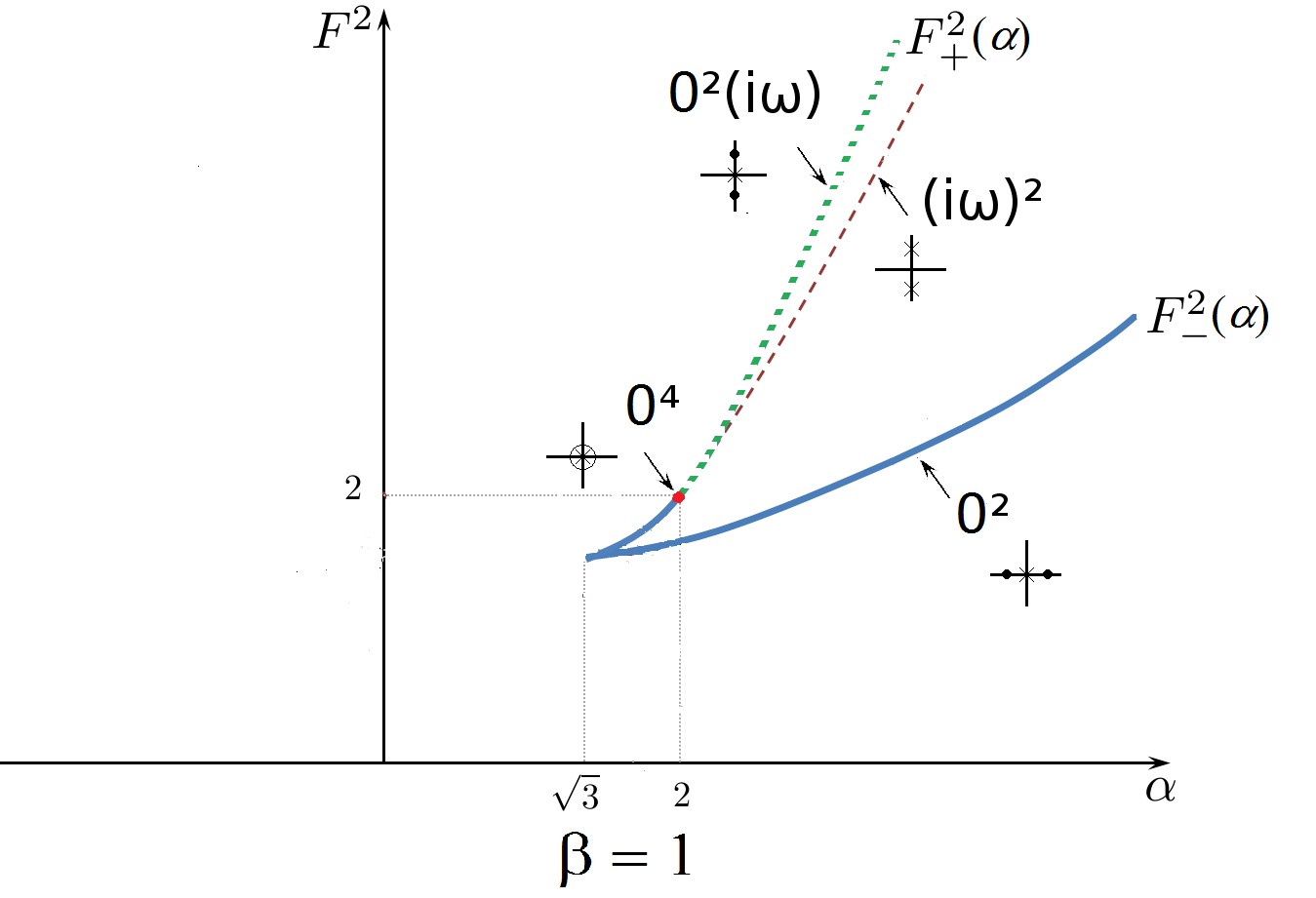}
\includegraphics[scale=0.23]{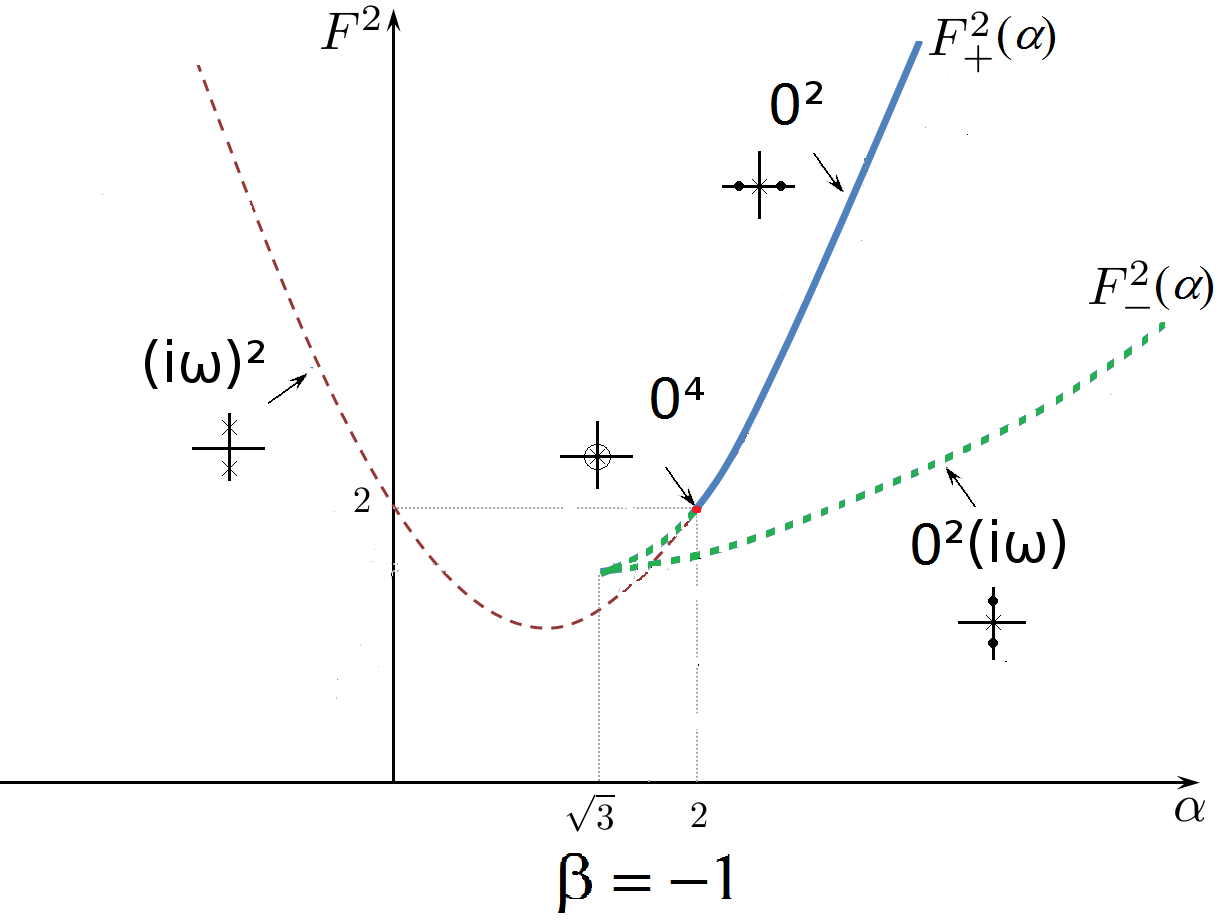}
\caption{Bifurcation diagrams for the case of normal (left) and anomalous (right) dispersion. Simple, double and quadruple eigenvalues are indicated with a dot, a cross and a circled cross, respectively. $(i\omega)^2$ bifurcations occur along the dashed lines, $0^2(i\omega)$ bifurcations along the dotted lines, and $0^2$ bifurcations along the continuous lines. In both cases a $0^4$ bifurcation occurs at the point $(\alpha^*, F^{*2})=(2,2)$.}
\label{diagram}
\end{figure}

The dynamics of the system (\ref{syst}) depends on the type of bifurcation. In the sequel we restrict to the codimension 1 bifurcations: $(i\omega)^2$, $0^2(i\omega)$ and $0^2$. We fix the parameter $F=F^*$ and take $\alpha$ as a bifurcation parameter by setting $\alpha=\alpha^*+\mu$, with small $\mu$. Then the system (\ref{syst}) becomes
\begin{equation}
\frac{\diff U}{\diff x}=L_{\pm}U+R_{\pm}(U,\mu),\label{syst2}
\end{equation}
where $U$, $L_{\pm}$ are the same as in (\ref{syst}) and
\[
R_{\pm}(U,\mu)=\pm \begin{pmatrix}
0\\ \widetilde{\psi_r}^3+\widetilde{\psi_r}\widetilde{\psi_i}^2+3\psi_r^* \widetilde{\psi_r}^2+2\psi_i^* \widetilde{\psi_r}\widetilde{\psi_i}+\psi_r^*\widetilde{\psi_i}^2-\mu(\widetilde{\psi_r}+\psi_r^*) \\ 0 \\
\widetilde{\psi_i}^3+\widetilde{\psi_r}^2 \widetilde{\psi_i}+\psi_i^* \widetilde{\psi_r}^2+2\psi_r^* \widetilde{\psi_r}\widetilde{\psi_i}+3\psi_i^*\widetilde{\psi_i}^2-\mu(\widetilde{\psi_i}+\psi_i^*)
\end{pmatrix}.
\]
The map $R_{\pm}$ is smooth, $R_{\pm}\in \mathcal C ^{\infty}(\mathbb R ^4 \times \mathbb R,\mathbb R^4)$, and 
\[R_{\pm}(0,0)=0, \qquad\diff _U R_{\pm}(0,0)=0,\]
where $\diff _U$ denotes its differential with respect to $U$. Notice that the Taylor expansion of $R_{\pm}$ is given by
\[
R_{\pm}(U,\mu)=\mu R_{\pm}^{0,1}+\mu  R_{\pm}^{1,1}(U)+ R_{\pm}^{2,0}(U,U)+ R_{\pm}^{3,0}(U,U,U),
\]
where
\[
R_{\pm}^{0,1}=\pm\begin{pmatrix}
0 \\ -\psi_r^* \\ 0 \\ -\psi_i^*
\end{pmatrix},\qquad R_{\pm}^{1,1}(U)=\pm\begin{pmatrix}
0 \\ -\widetilde{\psi_r} \\ 0 \\ -\widetilde{\psi_i}
\end{pmatrix},
\]
\[
R_{\pm}^{2,0}(U,U)=\pm\begin{pmatrix}
0 \\ 3\psi_r^* \widetilde{\psi_r}^2+2\psi_i^* \widetilde{\psi_r}\widetilde{\psi_i}+\psi_r^*\widetilde{\psi_i}^2 \\ 0 \\ \psi_i^* \widetilde{\psi_r}^2+2\psi_r^* \widetilde{\psi_r}\widetilde{\psi_i}+3\psi_i^*\widetilde{\psi_i}^2
\end{pmatrix},\qquad R_{\pm}^{3,0}(U,U,U)=\pm\begin{pmatrix}0\\ \widetilde{\psi_r}^3+\widetilde{\psi_r}\widetilde{\psi_i}^2 \\0 \\ \widetilde{\psi_i}^3+\widetilde{\psi_r}^2 \widetilde{\psi_i} \end{pmatrix}.
\]

\section{Reversible $(i\omega)^2$ bifurcation }\label{s2}

In this section we treat the case of the reversible $(i\omega)^2$ bifurcation and compute the normal form in the cases of normal ($\beta=1$) and anomalous ($\beta=-1$) dispersion. We then discuss the existence of bounded solutions of (\ref{syst2}), in particular of periodic and localized solutions.
\subsection{Case of normal dispersion ($\beta=1$)}\label{s21}

Consider the $(i\omega)^2$ bifurcation, which for $\beta=1$ occurs along the half line 
\[\alpha^*>2, \quad F^{*2}=1+(1-\alpha^*)^2\]
in the $(\alpha, F^2)$--parameter space, along which $\rho^*=1$. Recall that $\alpha=\alpha^*+\mu$ is the bifurcation parameter. For such values of the parameters, the matrix $L_+$ possesses two pairs of algebraically double and geometrically simple purely imaginary eigenvalues denoted by $\pm i\omega^*$, with $\omega^*=\sqrt{\alpha^*-2}$. 

Following \cite[Chapter 4, Lemma 3.15]{haragus2010local}, we consider a basis $\left\lbrace\re\ \zeta_0, \im\ \zeta_0, \re \ \zeta_1,\im\ \zeta_1\right\rbrace$ of $\rr^4$ such that
$$(L_+-i\omega^*)\zeta_0=0, \ (L_+-i\omega^*)\zeta_1=\zeta_0, \ (L_+ +i\omega^*)\overline{\zeta_0}=0, \ (L_+ +i\omega^*)\overline{\zeta_1}=\overline{\zeta_0},$$
$$S\zeta_0=\overline{\zeta_0}, \ S\zeta_1=-\overline{\zeta_1}.$$
A direct computation gives
\[
\zeta_0=\begin{pmatrix}
C \\ i\omega^* C \\ 1\\ i\omega^* 
\end{pmatrix}
, \quad
\zeta_1=\begin{pmatrix}
\frac{2i\omega^*}{1+2\psi_r^*\psi_i^*}\\C-\frac{2\omega^{*2}}{1+2\psi_r^*\psi_i^*}\\0\\ 1
\end{pmatrix},
\]
where, using the formulas for $\psi_r^*$ and $\psi_i^*$ and the fact that $\psi_r^{*2}+\psi_i^{*2}=1$, we obtain \[ C=\frac{1-2\psi_r^*\psi_i^*}{3\psi_r^{*2}+\psi_i^{*2}-2}=\frac{\psi_r^*-\psi_i^*}{\psi_r^*+\psi_i^*}=\frac{\alpha^*}{2-\alpha^*}.\] 
In this basis we represent a vector $u\in \rr^4$ by $$\dis u=A\zeta_0 + B\zeta_1 +\overline A \overline{\zeta _0 } +\overline B \overline{\zeta_1}, $$ with $A, B \in \cc$, by identifying $\rr^4$ with the complex vector space $\widetilde{\rr^4}=\left\lbrace  (A,B,\overline A, \overline B), A, B \in \cc \right\rbrace.$

\vspace{0.3cm}
\noindent\textbf{Normal form.} 
The vector field in the reversible system (\ref{syst2}) is of class $\mathcal C^{\infty}$ on $ \rr^4 \times \rr$ and satisfies
\[
R_+(0,0)=0,\quad {\mathrm{d}}_U R_+(0,0)=0.
\]
We apply to the system (\ref{syst2}) the normal form theorem for reversible systems in the case of a $(i\omega)^2$ bifurcation (see \cite[Chapter 3, Theorem 3.4]{haragus2010local} and \cite[Chapter 4, Lemma 3.17]{haragus2010local}). For any positive integer $N \geqslant 2$, there exist two neighborhoods $V_1$ and $V_2$ of 0 in $\widetilde{\rr^4}$ and $\rr$, respectively, and for any $\mu \in V_2$ there exists a polynomial $\Phi(\cdot,\mu):\widetilde{\rr^4} \longrightarrow \widetilde{\rr^4}$ of degree $N$ with the following properties:
\begin{enumerate}
\item the coefficients of the monomials of degree $q$ in $\Phi(\cdot,\mu)$ are functions of $\mu$ of class $\mathcal C^{\infty}$,
\[
\Phi(0,0,0,0,0)=0, \qquad \partial_{(A,B,\overline A,\overline B)}\Phi(0,0,0,0,0)=0,\]
and
\[\Phi(\overline A, -\overline B, A,-B,\mu)=S\Phi(A,B,\overline A,\overline B,\mu);
\]
\item for $(A,B,\overline A,\overline B)\in V_1$ the change of variable 
\begin{equation} 
u=A\zeta_0+B\zeta_1+\overline{A}\overline{\zeta_0}+\overline{B}\overline{\zeta_1}+\Phi(A,B,\overline A, \overline B, \mu)\label{chvar}
\end{equation}
transforms the system (\ref{syst2}) into the normal form
\begin{eqnarray}\label{normal}
\frac{\diff A}{\diff x}&=&i\omega^* A +B +iAP\left(|A|^2,\frac i 2 (A\overline{B}-\overline{A}B), \mu \right)+\rho_A(A,B,\overline A,\overline B,\mu)\nonumber\\
\frac{\diff B}{\diff x}&=&i\omega^* B +iBP\left(|A|^2,\frac i 2 (A\overline{B}-\overline{A}B),\mu\right)+AQ\left(|A|^2,\frac i 2 (A\overline{B}-\overline{A}B),\mu\right)\\ & &+\rho_B(A,B,\overline A,\overline B,\mu),\nonumber
\end{eqnarray}
where $P$ and $Q$ are real-valued polynomials of degree $N-1$ in $(A, B, \overline{A},\overline{B})$. Moreover the remainders $\rho_A$ and $\rho_B$ satisfy
\begin{eqnarray*}
\rho_A(\overline A,-\overline B, A,-B,\mu)&=&-\overline{\rho_A}(A,B,\overline A,\overline B,\mu),\\
\rho_B(\overline A,-\overline B, A,- B,\mu)&=&\overline{\rho_B}(A,B,\overline A,\overline B,\mu)
\end{eqnarray*}
and
$$|\rho_A(A,B,\overline A,\overline B,\mu)|+|\rho_B(A,B,\overline A,\overline B,\mu)|=o\left((|A|+|B|)^N\right).$$
\end{enumerate}

Consider the expansions of $P$ and $Q$:
\begin{eqnarray}\label{expq}
P\left(|A|^2,\frac i 2 (A\overline{B}-\overline{A}B), \mu \right)=a_1\mu+b_1|A|^2+\frac{ic_1}{2}(A\overline{B}-\overline{A}B)+O((|\mu|+(|A|+|B|)^2)^2),\nonumber \\
Q\left(|A|^2,\frac i 2 (A\overline{B}-\overline{A}B), \mu \right)=a_2\mu+b_2|A|^2+\frac{ic_2}{2}(A\overline{B}-\overline{A}B)+O((|\mu|+(|A|+|B|)^2)^2).
\end{eqnarray}
%and the truncated normal form
%
%\begin{eqnarray}\label{trunc1}
%\frac{\diff A}{\diff x}&=&i\omega A +B +iA\left(a_1\mu+b_1|A|^2+\frac{ic_1}{2}(A\overline{B}-\overline{A}B)\right)\nonumber\\ 
%\frac{\diff B}{\diff x}&=&i\omega B +iB\left(a_1\mu+b_1|A|^2+\frac{ic_1}{2}(A\overline{B}-\overline{A}B)\right)\\& & +A\left(a_2\mu+b_2|A|^2+\frac{ic_2}{2}(A\overline{B}-\overline{A}B)\right).\nonumber
%\end{eqnarray}
According to \cite[Chapter 4]{haragus2010local}, the dynamics of the system (\ref{normal}) is determined by the signs of the coefficients $a_2$ and $b_2$. 
\begin{lem}\label{lemma1}
The coefficients $a_2$ and $b_2$ in the expansion (\ref{expq}) of the polynomial $Q$ are given by
$$a_2=\frac{\alpha^*-1}{(\alpha^*-2)^3}>0, \quad   b_2=\frac{2F^{*2}(41-30\alpha^*)}{9(\alpha^*-2)^5}<0.$$
\end{lem}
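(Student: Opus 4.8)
The plan is to compute the coefficients $a_2$ and $b_2$ directly from the normal form by plugging the change of variables (\ref{chvar}) into the system (\ref{syst2}) and matching the relevant monomials. Concretely, one writes $u = A\zeta_0 + B\zeta_1 + \overline A\overline{\zeta_0} + \overline B\overline{\zeta_1} + \Phi$, differentiates with respect to $x$ using the normal form (\ref{normal}), and identifies the result with $L_+ u + R_+(u,\mu)$. Since $R_+$ has no quadratic-in-$\mu$ or higher terms and its Taylor expansion $R_+ = \mu R_+^{0,1} + \mu R_+^{1,1} + R_+^{2,0} + R_+^{3,0}$ is known explicitly, the task reduces to a finite, purely algebraic computation. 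The coefficient $a_2$ sits in front of the monomial $\mu A$ in the $B$-equation, and $b_2$ in front of $|A|^2 A = A^2\overline A$ in the $B$-equation.

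The key steps, in order, are as follows. First, extract $a_2$: it is determined by the linear-in-$\mu$ part, so one only needs the term $\mu R_+^{0,1}$ together with the part of $\Phi$ linear in $\mu$ and linear in $(A,B,\overline A,\overline B)$. The contribution of $\mu R_+^{0,1}$ is a pure $\mu$-term (no $A$-dependence) and is absorbed by the $\mu$-dependent part of $\Phi$ at order $\mu^1 (A,B)^0$; the genuine $\mu A$ coefficient comes from $\mu R_+^{1,1}$ acting on $A\zeta_0$, projected onto the generalized kernel. In practice one solves the homological equation at order $\mu A$: writing the $\mu A$-coefficient of $\Phi$ as $\mu A \phi$, the equation $(L_+ - i\omega^*)\phi = (\text{known vector}) - a_1\zeta_0$, with the $B$-component of the right-hand side forced to be $a_2$, and using the Jordan-chain relations $(L_+ - i\omega^*)\zeta_0 = 0$, $(L_+ - i\omega^*)\zeta_1 = \zeta_0$ to read off $a_2$ via the solvability condition along $\overline{\zeta_1}$ (the adjoint eigenvector). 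Here $R_+^{1,1}(u) = \mp(\widetilde{\psi_r}, \widetilde{\psi_i})$ in the appropriate slots is linear, so this is a $4\times 4$ linear-algebra computation. Second, extract $b_2$: this requires the cubic terms. The $A^2\overline A$ coefficient in the $B$-equation receives contributions from (a) $R_+^{3,0}$ evaluated on three copies of the linear part $A\zeta_0 + \overline A\overline{\zeta_0}$, picking the $A^2\overline A$ monomial, and (b) $R_+^{2,0}$ evaluated on one linear factor and one quadratic factor coming from the quadratic part $\Phi^{2,0}$ of $\Phi$, again picking $A^2\overline A$. So one must first solve the homological equations at quadratic order — i.e. compute the $A^2$, $A\overline A$, $AB$, etc. coefficients of $\Phi$ from $(L_+ \pm 2i\omega^*)\Phi^{(2)} = R_+^{2,0}(\cdot,\cdot)$ and the resonant $A\overline A$ piece — then feed these into $R_+^{2,0}$, add the direct $R_+^{3,0}$ contribution, and project onto $\overline{\zeta_1}$ using the duality pairing. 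The reversibility symmetry $S$ and the constraints $S\zeta_0 = \overline{\zeta_0}$, $S\zeta_1 = -\overline{\zeta_1}$ should be used throughout to cut the number of unknown coefficients roughly in half (they force $P, Q$ real and kill many monomials). Third, substitute the explicit values: on this bifurcation branch $\rho^* = 1$, $\omega^{*2} = \alpha^* - 2$, $C = \alpha^*/(2-\alpha^*)$, $F^{*2} = 1 + (1-\alpha^*)^2 = \rho^*(1 + (\rho^*-\alpha^*)^2)$, and $\psi_r^*, \psi_i^*$ are the explicit functions of $\alpha^*, F^*$ given in Section \ref{s1}; simplifying yields $a_2 = (\alpha^*-1)/(\alpha^*-2)^3$ and $b_2 = 2F^{*2}(41 - 30\alpha^*)/(9(\alpha^*-2)^5)$. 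Finally, check the signs: since the branch requires $\alpha^* > 2$, one has $\alpha^* - 1 > 0$ and $(\alpha^*-2)^3 > 0$, so $a_2 > 0$; and $41 - 30\alpha^* < 41 - 60 < 0$ while $(\alpha^*-2)^5 > 0$ and $F^{*2} > 0$, so $b_2 < 0$.

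The main obstacle is the bookkeeping in the second step: correctly solving the quadratic homological equations for $\Phi^{(2)}$ (which involves inverting $L_+ - 2i\omega^*$, $L_+$, and $L_+ + 2i\omega^*$ on the appropriate subspaces — note $L_+$ itself is singular here only if $0$ is an eigenvalue, which it is not on this branch, so the $A\overline A$ term is non-resonant and invertible) and then tracking all the $A^2\overline A$ contributions from the bilinear map $R_+^{2,0}$ without sign or combinatorial-factor errors. A secondary subtlety is choosing the right normalization of the adjoint basis $\{\overline{\zeta_0}^*, \overline{\zeta_1}^*\}$ dual to $\{\zeta_0, \zeta_1\}$ so that the projection producing $a_2$ and $b_2$ matches exactly the coefficients in (\ref{expq}); getting this normalization consistent with the conventions of \cite[Chapter 4, Lemma 3.17]{haragus2010local} is essential for the final formulas to come out as stated. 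Everything else is routine polynomial algebra.
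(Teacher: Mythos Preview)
Your overall strategy---substitute the change of variables, expand $R_+$, solve homological equations order by order, and read off the resonant coefficients via projection onto the adjoint eigenvector---is exactly the method the paper follows (citing \cite[Section 4.3.3]{haragus2010local}). However, your computation of $a_2$ has a genuine gap.

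You write that ``the contribution of $\mu R_+^{0,1}$ is a pure $\mu$-term \dots\ and is absorbed by the $\mu$-dependent part of $\Phi$ at order $\mu^1(A,B)^0$; the genuine $\mu A$ coefficient comes from $\mu R_+^{1,1}$ acting on $A\zeta_0$.'' The first clause is correct: one solves $L_+\Phi_{00001}=-R_+^{0,1}$ for the constant-in-$(A,B)$ shift. But the second clause is wrong, because once $\mu\Phi_{00001}$ is present in $u$, the quadratic map $R_+^{2,0}(u,u)$ produces a cross term $2\mu A\, R_+^{2,0}(\zeta_0,\Phi_{00001})$ which also lands at order $\mu A$. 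The correct formula is
\[
a_2=\bigl\langle R_+^{1,1}\zeta_0+2R_+^{2,0}(\zeta_0,\Phi_{00001}),\,\zeta_1^*\bigr\rangle,
\]
and in this particular problem the omission is fatal: a direct check with the paper's $\zeta_1^*$ shows $\langle R_+^{1,1}\zeta_0,\zeta_1^*\rangle=\tfrac{2-\alpha^*}{4F^{*2}}(-C+C)=0$, so the \emph{entire} value of $a_2$ comes from the $R_+^{2,0}(\zeta_0,\Phi_{00001})$ term you dropped. Your outline for $b_2$ (direct $R_+^{3,0}$ plus $R_+^{2,0}$ fed with the quadratic pieces $\Phi_{20000}$, $\Phi_{10100}$ of $\Phi$) is correct and matches the paper; the same feedback mechanism is what you need to restore for $a_2$.
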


\begin{proof}
We follow the method in \cite[Section 4.3.3]{haragus2010local}.

\vspace{0.3cm}
\noindent\textbf{Computation of $a_2$. }
Consider a vector $\zeta_1^*$ such that
$$\left\langle\zeta_0,\zeta_1^*\right\rangle=0, \quad \left\langle\overline{\zeta_0},\zeta_1^*\right\rangle=0 ,\quad \left\langle\zeta_1,\zeta_1^*\right\rangle=1,\quad \left\langle\overline{\zeta_1},\zeta_1^*\right\rangle=0, \quad S \zeta_1^*=-\overline{\zeta_1^*},\quad L_+^*\zeta_1^*=-i\omega^* \zeta_1^*,$$
where $L_+^*$ is the adjoint operator of $L_+$ and $\langle\cdot, \cdot\rangle$ denotes the hermitian scalar product on $\widetilde{\mathbb R^4}$.
A direct computation gives \[ \zeta_1^*=\frac{2-\alpha^*}{4F^{*2}} \begin{pmatrix}
-i\omega^*\\ 1 \\ i\omega^* C \\-C
\end{pmatrix}.\]
According to \cite[Appendix D.2.]{haragus2010local}, the coefficient $a_2$ is given by
\begin{equation}
a_2=\langle R_+^{1,1}\zeta_0+2R_+^{2,0}(\zeta_0,\Phi_{00001}),\zeta_1^*\rangle,\label{a2}
\end{equation}
where $\Phi_{00001}$ is the coefficient of the monomial $\mu$ in the expansion of the polynomial $\Phi$ in (\ref{chvar}), solving
\[
L_+\Phi_{00001}=-R_+^{0,1}.
\]
Using the expression of $R_+^{0,1}$ given in Section 2, we find
\[
\Phi_{00001}=\frac{1}{(\alpha^*-2)^2}\begin{pmatrix}
(1-\alpha^*)\psi_r^*+\psi_i^* \\ 0 \\ -\psi_r^*+(1-\alpha^*)\psi_i^* \\ 0
\end{pmatrix}.
\]
Next, we compute
\[
R_+^{1,1}\zeta_0=\begin{pmatrix}
0 \\ -C \\ 0 \\ -1
\end{pmatrix}
\]
and
\[
R_+^{2,0}(\zeta_0,\Phi_{00001})=\frac{1}{(\alpha^*-2)^2}\begin{pmatrix}
0\\ \left( 3C(1-\alpha^*)-1\right)\psi_r^{*2}+2\left( C+1-\alpha^*\right)\psi_r^{*}\psi_i^*+\left( C(1-\alpha^*)+1\right)\psi_i^{*2}\\ 0 \\ (1-\alpha^*-C)\psi_r^{*2}+2\left( C(1-\alpha^*)-1\right)\psi_r^* \psi_i^*+\left(C+3(1-\alpha^*) \right)\psi_i^{*2}
\end{pmatrix}.
\]
We obtain from (\ref{a2}) 
\[
a_2=\frac{(2C(1-\alpha^*)+C^2-1)(\psi_r^{*2}-\psi_i^{*2})+2(2C+(1-\alpha^*)(1-C^2))\psi_r^* \psi_i^*}{2 F^{*2} (2-\alpha^*)}.
\]
Finally, using the symbolic package Maple, we have
\[
a_2=\frac{\alpha^*-1}{(\alpha^*-2)^3}>0.
\]

\noindent\textbf{Computation of $b_2$.}
According to \cite[Appendix D.2.]{haragus2010local}, the coefficient $b_2$ is given by
\begin{equation}\label{eq2}
b_2=\left\langle 2R_+^{2,0}(\zeta_0,\Phi_{10100})+2R_+^{2,0}(\overline{\zeta_0},\Phi_{20000})+3R_+^{3,0}(\zeta_0,\zeta_0,\overline{\zeta_0}),\zeta_1^* \right\rangle,
\end{equation} 
where $\Phi_{20000}$ and $\Phi_{10100}$ are the coefficients of the monomials $A^2$ and $A\overline A$, respectively, in the expansion of the polynomial $\Phi$ in (\ref{chvar}), solving
\[
(L_+-2i\omega^*)\Phi_{20000}+R_+^{2,0}(\zeta_0,\zeta_0)=0,
\]
\[L_+\Phi_{10100}+2R_+^{2,0}(\zeta_0,\overline{\zeta_0})=0.
\]
Using the formulas for $R_+^{2,0}$ in Section 2, we find
\[
R_+^{2,0}(\zeta_0,\zeta_0)=R_+^{2,0}(\zeta_0,\overline{\zeta_0})=\begin{pmatrix}
0 \\ (3C^2+1)\psi_r^*+2C\psi_i^* \\ 0 \\ 2C \psi_r^* +(C^2+3)\psi_i^*
\end{pmatrix},
\]

and then\[ \Phi_{20000}=\frac{1}{9(\alpha^*-2)^2}\begin{pmatrix}
D\\ 2i\omega^* D \\ E \\ 2i\omega^* E
\end{pmatrix}, \quad  \Phi_{10100}=\frac{1}{(\alpha^*-2)^2}\begin{pmatrix}
G\\0\\H\\0
\end{pmatrix},
\]
where the quantities $D$, $E$, $G$, $H$ are given by
\[
D=-(2\psi_i^{*2}+3\alpha^*-7)((3C^2+1)\psi_r^*+2C\psi_i^*)+(2\psi_r^* \psi_i^*-1)(2C\psi_r^*+(C^2+3)\psi_i^*),
\]
\[
E=-(2\psi_r^{*2}+3\alpha^*-7)(2C\psi_r^*+(C^2+3)\psi_i^*)+(2\psi_r^*\psi_i^*+1)((3C^2+1)\psi_r^*+2C\psi_i^*)
\]
\[
G=-2(2\psi_i^{*2}+1-\alpha^*)((3C^2+1)\psi_r^*+2C\psi_i^*)-2((3C^2+1)\psi_i^*+2C\psi_r^*)(\psi_r^*-\psi_i^*)^2,
\]
and
\[
H=-2(2\psi_r^{*2}+1-\alpha^*)(2C\psi_r^*+(C^2+3)\psi_i^*)+2((3C^2+1)\psi_r^*+2C\psi_i^*)(2\psi_r^*\psi_i^*+1).
\]
Next, we compute
\[
R_+^{2,0}(\zeta_0,\Phi_{10100})=\begin{pmatrix}
0 \\ (3CG+H)\psi_r^*+(CH+G)\psi_i^* \\ 0 \\ (CH+G)\psi_r^*+(CG+3H)\psi_i^*
\end{pmatrix},\]
\[R_+^{2,0}(\overline{\zeta_0},\Phi_{20000})=
\begin{pmatrix}
0 \\ (3CD+E)\psi_r^*+(CE+D)\psi_i^* \\ 0 \\ (CE+D)\psi_r^*+(CD+3E)\psi_i^*
\end{pmatrix},
\quad
R_+^{2,0}(\zeta_0,\Phi_{10100})=\begin{pmatrix}
0\\C^3+C\\0\\C^2+1
\end{pmatrix}.
\]
Finally, using the symbolic package Maple, we obtain from (\ref{eq2})
\begin{eqnarray*}
b_2&=&\frac{2-\alpha^*}{2F^{*2}}\left(\left(2C(D+G)+(1-C^2)(E+H)\right)\psi_r^*+\left((1-C^2)(D+G)-2C(E+H)\right)\psi_i^*\right)\\
&=&\frac{2F^{*2}(41-30\alpha^*)}{9(\alpha^*-2)^5}<0.
\end{eqnarray*}
This completes the proof of the lemma.
\end{proof}

The reversibility of the system (\ref{syst2}) is a key argument in the proofs of the existence of different solutions (see \cite[Section III]{iooss1993perturbed} for more details). According to \cite[Chapter 4, Theorem 3.21]{haragus2010local}, we obtain the following result.

\begin{theo}\label{thmiomega2}Suppose that $\alpha^*>2$ and $F^{*2}=1+(1-\alpha^*)^2$. 
\begin{enumerate}
\item For any $\mu>0$ sufficiently small, the system (\ref{syst2}) possesses one symmetric equilibrium, a one-parameter family of periodic orbits, a two-parameter family of quasiperiodic orbits and a pair of reversible homoclinic orbits to the symmetric equilibrium. 
\item For any $\mu <0$ sufficiently small, the system (\ref{syst2}) possesses one symmetric equilibrium, a one-parameter family of periodic orbits and a two-parameter family of quasiperiodic orbits. 
\end{enumerate}
\end{theo}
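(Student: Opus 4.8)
The plan is to invoke the abstract normal form analysis for the reversible $(i\omega)^2$ bifurcation from \cite[Chapter 4, Theorem 3.21]{haragus2010local}, for which the only missing ingredient is the verification of the relevant nondegeneracy conditions on the leading-order coefficients of the normal form. The theorem in that reference states precisely that the qualitative picture of bounded solutions of the normal form system \eqref{normal} — hence, via the change of variables \eqref{chvar}, of the full system \eqref{syst2} near $U=0$ — is governed by the signs of $a_2$ and $b_2$. The generic situation treated there is $a_2 \neq 0$ and $b_2 \neq 0$, with the precise solution set depending on $\operatorname{sgn}(a_2)$, $\operatorname{sgn}(b_2)$ and $\operatorname{sgn}(\mu a_2)$; reversibility under the symmetry $S$ is what guarantees the existence of the homoclinic orbits and forces the equilibrium and periodic/quasiperiodic families to be symmetric.

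So the first step is simply to record that Lemma~\ref{lemma1} gives $a_2 > 0$ and $b_2 < 0$ for every admissible parameter value $\alpha^* > 2$, $F^{*2} = 1 + (1-\alpha^*)^2$ (noting $41 - 30\alpha^* < 0$ since $\alpha^* > 2$, and $\alpha^* - 1 > 0$). Second, I would check that the remaining hypotheses of \cite[Chapter 4, Theorem 3.21]{haragus2010local} are met: the vector field is $\mathcal C^\infty$ with vanishing value and linearization at the origin (already stated after \eqref{syst2}), $L_+$ has the required $(i\omega)^2$ spectral structure with the basis $\{\zeta_0,\zeta_1,\overline{\zeta_0},\overline{\zeta_1}\}$ constructed above, and the system is reversible with respect to $S$ satisfying $S\zeta_0 = \overline{\zeta_0}$, $S\zeta_1 = -\overline{\zeta_1}$. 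Third, I would read off the conclusion of that theorem in the case $a_2 > 0$, $b_2 < 0$: for $\mu > 0$ one obtains the symmetric equilibrium, the one-parameter family of periodic orbits, the two-parameter family of quasiperiodic (KAM) orbits, and the pair of reversible homoclinic orbits to the equilibrium; for $\mu < 0$ the homoclinic orbits disappear but the equilibrium and the periodic and quasiperiodic families persist. Translating back through \eqref{chvar} — which is a near-identity polynomial diffeomorphism on a neighborhood of $0$ — these become bounded solutions of \eqref{syst2}, and hence stationary solutions of \eqref{lle}, which is exactly the statement of the theorem.

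The only genuine subtlety — and the step I would expect to need the most care — is not an obstacle in the usual sense but rather a bookkeeping point: the abstract theorem concerns the \emph{truncated} normal form, and the persistence of the periodic and homoclinic solutions under the higher-order remainder terms $\rho_A,\rho_B = o((|A|+|B|)^N)$ must be taken from the persistence results in \cite[Chapter 4]{haragus2010local} (periodic orbits persist by a straightforward Lyapunov–Schmidt/implicit function argument using reversibility; the homoclinic orbits persist because they are codimension-zero in the reversible category for this normal form, again by the arguments of \cite[Chapter 4]{haragus2010local} and \cite[Section III]{iooss1993perturbed}); the quasiperiodic family requires a KAM-type argument, also supplied there. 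Since all of this machinery is packaged into \cite[Chapter 4, Theorem 3.21]{haragus2010local}, the proof reduces to citing it once the sign information of Lemma~\ref{lemma1} is in hand; I would therefore keep the proof short, stating the verification of hypotheses and then applying the cited theorem directly.

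\begin{proof}
By Lemma~\ref{lemma1}, under the hypotheses $\alpha^* > 2$ and $F^{*2} = 1 + (1-\alpha^*)^2$ we have $a_2 > 0$ and $b_2 < 0$. The vector field of \eqref{syst2} is of class $\mathcal C^\infty$ on $\rr^4 \times \rr$ with $R_+(0,0) = 0$ and $\diff_U R_+(0,0) = 0$, the matrix $L_+$ has a pair $\pm i\omega^*$ of algebraically double, geometrically simple purely imaginary eigenvalues with associated basis $\{\re\,\zeta_0,\im\,\zeta_0,\re\,\zeta_1,\im\,\zeta_1\}$ as above, and the system is reversible with respect to $S = \operatorname{diag}(1,-1,1,-1)$, which satisfies $S\zeta_0 = \overline{\zeta_0}$ and $S\zeta_1 = -\overline{\zeta_1}$. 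Hence all hypotheses of \cite[Chapter 4, Theorem 3.21]{haragus2010local} are satisfied. Applying that theorem in the case $a_2 > 0$, $b_2 < 0$: for $\mu > 0$ small the normal form \eqref{normal} possesses a symmetric equilibrium near the origin, a one-parameter family of periodic orbits, a two-parameter family of quasiperiodic orbits, and a pair of reversible homoclinic orbits to the symmetric equilibrium; for $\mu < 0$ small it possesses the symmetric equilibrium, the one-parameter family of periodic orbits and the two-parameter family of quasiperiodic orbits, but no homoclinic orbits. The change of variables \eqref{chvar} is a near-identity polynomial diffeomorphism of a neighborhood of $0$ in $\widetilde{\rr^4}$ onto a neighborhood of $0$ in $\rr^4$, and it conjugates \eqref{syst2} to \eqref{normal} up to the remainder terms $\rho_A,\rho_B$; the persistence of all the above solutions under these remainders is part of the conclusion of \cite[Chapter 4, Theorem 3.21]{haragus2010local} (see also \cite[Section III]{iooss1993perturbed}), using in an essential way the reversibility of the system. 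Transporting the solutions back through \eqref{chvar} yields the corresponding bounded solutions of \eqref{syst2}, which proves the theorem.
\end{proof}
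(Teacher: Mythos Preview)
Your proposal is correct and takes essentially the same approach as the paper: the paper's justification for Theorem~\ref{thmiomega2} consists precisely of invoking Lemma~\ref{lemma1} for the signs $a_2>0$, $b_2<0$, noting the reversibility (with a pointer to \cite[Section III]{iooss1993perturbed}), and then applying \cite[Chapter 4, Theorem 3.21]{haragus2010local} directly. Your write-up is simply a more explicit unpacking of the same argument, including the hypothesis verification and the persistence remarks that the paper leaves implicit in the citation.
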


\noindent\textbf{Solutions. }Going back to the equation (\ref{lle}), we can compute the leading order terms in the Taylor expansions of these solutions. We focus on the periodic and homoclinic solutions. 
First, the origin is an equilibrium of the normal form (\ref{normal}), which corresponds to a symmetric equilibrium of the system (\ref{syst2}), and to a constant solution of (\ref{lle}) with corresponding parameters $\alpha^*+\mu$ and $F^*$, given by
\[
\psi=\psi^*+O(\mu).
\] 
The leading order terms of the non-constant solutions are computed from the truncated system
\begin{eqnarray}
\frac{\diff A}{\diff x}&=&i\omega^* A +B \label{trunciomega}\\
\frac{\diff B}{\diff x}&=&i\omega^* B +a_2\mu A+b_2 |A|^2 A.\label{trunciomega1}
\end{eqnarray}

In the cases $\mu>0$ and $\mu<0$, the system (\ref{trunciomega})-(\ref{trunciomega1}) has a family of periodic solutions of the form
\[
A(x)=\sqrt{\frac{-a_2 \mu-K^2}{b_2}}e^{i(\omega^* +K)x},\quad B(x)=iK\sqrt{\frac{-a_2 \mu-K^2}{b_2}}e^{i(\omega^* +K)x},
\]
which are parametrized by some $K\in \mathbb R$, such that $a_2 \mu -K^2>0$.
%and
%\[
%A(x)=\left(\sqrt{\frac{-a_2 \mu}{b_2}}+\sqrt{\mu}\varepsilon \cos(\sqrt{2a_2\mu}x) \right)e^{i\omega x}+O(\mu+\varepsilon ^2\sqrt{\mu})  ,\quad B(x)=O(\mu+\varepsilon ^2 \sqrt{\mu}),
%\]
The corresponding solutions of the equation (\ref{lle}) have the form
\[
\psi_{\mu,K}(x)=P(kx),
\]
where the $2 \pi-$periodic function $P$ is given by
\[
P(y)=\psi^*+2\sqrt{\frac{-a_2 \mu-K^2}{b_2}}\left(C-\frac{4\omega^*}{1+2\psi_r^* \psi_i ^*}+i\right)\cos( y)+O(\mu),
\]
and $k=\omega^*+K+O(\mu^{\frac 1 2}).$
%and
%\[
%\psi_{\mu, \varepsilon}(x)=\psi^*+2(C+i) \left(\sqrt{\frac{-a_2 \mu}{b_2}}+\sqrt{\mu}\varepsilon \cos(\sqrt{2a_2\mu}x) \right)\cos(\omega x)+O(\mu+\varepsilon ^2 \sqrt{\mu}),
%\]
%respectively.
%\begin{multline}
%\psi_{\mu, \varepsilon}(x)=\psi^*+2(C+i)\sqrt{\frac{-a_2\mu}{b_2}}\cos(\omega x)+\varepsilon\cos\left( \sqrt{2a_2 \mu}x\right)\\+
%\frac{4\omega}{1+2\psi_r^* \psi_i^*} \sqrt{2a_2 \mu}\varepsilon\sin(\omega x) \sin\left(\sqrt{2a_2 \mu}\right)+O(\mu^{\frac 1 2} \varepsilon^2),\nonumber
%\end{multline}

In the case $\mu>0$, the reversible homoclinic solutions of the system (\ref{trunciomega})-(\ref{trunciomega1}) have the form
\[
A(x)=\sqrt{\frac{-2a_2 \mu}{b_2}}\,{\rm{sech}} (\sqrt{a_2 \mu}x)e^{i\omega^* x},\quad B(x)=-\sqrt{\frac{-2}{b_2}}a_2 \mu \tanh(\sqrt{a_2 \mu}x)\,{\rm{sech}} (\sqrt{a_2 \mu}x)e^{i\omega^* x}.
\]
The corresponding solutions of (\ref{lle}) read
\begin{multline*}
\psi_{\mu}(x)=\psi^*+2(C+i)\sqrt{\frac{-2a_2 }{b_2}}\,{\rm{sech}} (\sqrt{a_2 \mu}x)\cos(\omega^* x)\mu^{\frac 1 2}\\+\frac{4 \omega^*}{1+\psi_r^* \psi_i ^*}\sqrt{\frac{-2}{b_2}}a_2  \tanh(\sqrt{a_2 \mu}x)\,{\rm{sech}} (\sqrt{a_2 \mu}x)\sin(\omega^* x)\mu+O(\mu^{\frac 3 2}).
\end{multline*}
%in which
%\[
%R_0(x)=\sqrt{\frac{-2a_2 \mu}{b_2}}\frac{1}{\cosh(\sqrt{a_2 \mu}x)},\quad
%\theta_0(x)=a_1 \mu x-2\frac{b_1 \sqrt{a_2 \mu}}{b_2}\tanh(\sqrt{a_2 \mu}x).
%\]
%For $\mu<0$, this system has a family of small quasiperiodic solutions $(A(x),B(x),\overline A (x), \overline{B}(x))$, with
%\[
%A(x)=\sqrt{-\mu}\varepsilon\cos(\sqrt{-a_2\mu}x)e^{i\omega x}+O(-\mu+\varepsilon ^2 \sqrt{-\mu}),\quad B(x)=O(-\mu+\varepsilon ^2 \sqrt{-\mu}),
%\]
%where $\varepsilon$ is a real and small parameter. The corresponding solutions of the equation (\ref{lle}) have the form
%\[
%\psi_{\mu, \varepsilon}(x)=\psi^*+2(C+i)\sqrt{-\mu}\varepsilon \cos(\sqrt{-a_2\mu}x)\cos(\omega x)+O(-\mu+\varepsilon ^2 \sqrt{-\mu}).
%\]

\subsection{Case of anomalous dispersion ($\beta=-1$)}

We consider the case $\beta=-1$ and keep the same notations as in the previous section. We suppose that $\alpha^*<2$, $F^{*2}=1+(1-\alpha^*)^2$ and $\rho^*=1$. The $(i\omega)^2$ bifurcation which occurs in this case has been studied in \cite{miyaji2010bifurcation}, using a different formulation of the dynamical system. With this latter formulation it is more convenient to take $\rho$ as a bifurcation parameter. We briefly discuss below this bifurcation in our setting, with $\alpha$ as a bifurcation parameter. %, which is more relevant in the physical framework developped in \cite{chembo2013spatiotemporal,godey2014stability}.

As in the previous section we first compute a suitable basis $\left\lbrace\re\ \zeta_0, \im\ \zeta_0, \re \ \zeta_1,\im\ \zeta_1\right\rbrace$ of $\rr^4$, where now

$$\zeta_0=\begin{pmatrix}
C\\i\omega^* C \\ 1 \\ i\omega^*
\end{pmatrix},\qquad \zeta_1=\begin{pmatrix}
-\frac{2i\omega^*}{1+2\psi_r^* \psi_i ^*}\\ C+ \frac{2\omega^{*2}}{1+2\psi_r^*\psi_i^*}\\0\\1
\end{pmatrix},\qquad \omega^*=\sqrt{2-\alpha^*},\qquad C=\frac{\alpha^*}{2-\alpha^*}.$$
 
Following the same arguments, the system (\ref{syst2}) has the normal form (\ref{normal}). The coefficients $a_2$ and $b_2$ are computed as in the proof of Lemma \ref{lemma1}.

\begin{lem} The coefficients $a_2$ and $b_2$ in the expansion (\ref{expq}) of the polynomial $Q$ are given by
  $$ a_2=\frac{\alpha^*-1}{(2-\alpha^*)^3}, \qquad b_2=\frac{2F^{*2}(41-30\alpha^*)}{9(2-\alpha^*)^5}.$$
 %Then $a_2>0$ if $\alpha^* \in ]1,2[$ and $a_2<0$ if $\alpha^*<1$.
\end{lem}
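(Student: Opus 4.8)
The plan is to repeat, \emph{mutatis mutandis}, the computation carried out in the proof of Lemma~\ref{lemma1}, now with the matrix $L_-$, the nonlinear terms $R_-^{0,1}$, $R_-^{1,1}$, $R_-^{2,0}$, $R_-^{3,0}$ (which differ from those in the case $\beta=1$ only by an overall sign), and the basis $\left\lbrace \re\,\zeta_0,\im\,\zeta_0,\re\,\zeta_1,\im\,\zeta_1\right\rbrace$ displayed above, with $\omega^*=\sqrt{2-\alpha^*}$ and $C=\alpha^*/(2-\alpha^*)$. Since $\alpha^*<2$, the eigenvalues of $L_-$ are exactly $\pm i\omega^*\neq 0$, so $L_-$ is invertible, and $2i\omega^*$ is not an eigenvalue of $L_-$, so $L_--2i\omega^*$ is invertible as well; this is what guarantees that the linear equations for the coefficients of $\Phi$ below admit (unique) solutions. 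The formulas for $a_2$ and $b_2$ are again those of \cite[Appendix D.2]{haragus2010local}, used exactly as in (\ref{a2}) and (\ref{eq2}).

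For $a_2$, I would first determine the adjoint eigenvector $\zeta_1^*$ characterised by $\langle\zeta_0,\zeta_1^*\rangle=\langle\overline{\zeta_0},\zeta_1^*\rangle=\langle\overline{\zeta_1},\zeta_1^*\rangle=0$, $\langle\zeta_1,\zeta_1^*\rangle=1$, $S\zeta_1^*=-\overline{\zeta_1^*}$, $L_-^*\zeta_1^*=-i\omega^*\zeta_1^*$, obtained up to its normalising scalar by a direct computation as in Lemma~\ref{lemma1}. Then solve $L_-\Phi_{00001}=-R_-^{0,1}$ for $\Phi_{00001}$ (possible by invertibility of $L_-$) and substitute into $a_2=\langle R_-^{1,1}\zeta_0+2R_-^{2,0}(\zeta_0,\Phi_{00001}),\zeta_1^*\rangle$. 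Using $\rho^*=\psi_r^{*2}+\psi_i^{*2}=1$ together with $\psi_r^*=F^*/(1+(1-\alpha^*)^2)$, $\psi_i^*=F^*(1-\alpha^*)/(1+(1-\alpha^*)^2)$ and $F^{*2}=1+(1-\alpha^*)^2$, the resulting rational expression collapses to $a_2=(\alpha^*-1)/(2-\alpha^*)^3$.

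For $b_2$, solve $(L_--2i\omega^*)\Phi_{20000}=-R_-^{2,0}(\zeta_0,\zeta_0)$ and $L_-\Phi_{10100}=-2R_-^{2,0}(\zeta_0,\overline{\zeta_0})$, both solvable by the invertibility statements above; as in Lemma~\ref{lemma1}, note that $R_\pm^{2,0}(U,V)$ depends only on the first and third components of $U$ and $V$, which coincide for $\zeta_0$ and $\overline{\zeta_0}$, so $R_-^{2,0}(\zeta_0,\zeta_0)=R_-^{2,0}(\zeta_0,\overline{\zeta_0})$. Then plug $\Phi_{20000}$, $\Phi_{10100}$ into $b_2=\langle 2R_-^{2,0}(\zeta_0,\Phi_{10100})+2R_-^{2,0}(\overline{\zeta_0},\Phi_{20000})+3R_-^{3,0}(\zeta_0,\zeta_0,\overline{\zeta_0}),\zeta_1^*\rangle$ and simplify with the same substitutions (and a symbolic-algebra package, as in Lemma~\ref{lemma1}) to reach $b_2=2F^{*2}(41-30\alpha^*)/\bigl(9(2-\alpha^*)^5\bigr)$.

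The main obstacle is bookkeeping rather than conceptual: one must track carefully the sign changes coming from $L_-$ versus $L_+$ and from the sign change in the first two components of $\zeta_1$ (hence of $\zeta_1^*$), and then carry out the final rational-function simplification. A natural consistency check is that passing from $\beta=1$ to $\beta=-1$ amounts to replacing $\alpha^*-2$ by $-(2-\alpha^*)$ in the relevant denominators while leaving $\alpha^*-1$, $41-30\alpha^*$ and $F^{*2}$ untouched, so the formulas of Lemma~\ref{lemma1} turn into the claimed ones under $(\alpha^*-2)^k\mapsto(2-\alpha^*)^k$; since the exponents $3$ and $5$ are odd, $a_2$ and $b_2$ change sign, which is why — unlike in the normal dispersion case — no definite sign is asserted here (indeed $a_2$ vanishes at $\alpha^*=1$ and $b_2$ at $\alpha^*=41/30$, both inside the admissible range $\alpha^*<2$).
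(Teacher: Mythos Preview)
Your proposal is correct and follows precisely the approach the paper takes: the paper simply states that $a_2$ and $b_2$ ``are computed as in the proof of Lemma~\ref{lemma1}'', and you have spelled out exactly that computation with the appropriate sign and basis changes. Your consistency check via the substitution $(\alpha^*-2)^k\mapsto(2-\alpha^*)^k$ is a nice addition that the paper does not make explicit.
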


Notice that the signs of these coefficients depend upon $\alpha^*$. They change at $\alpha^*=1$ and $\alpha^*=41/30$. Then, from \cite[Chapter 4, Theorem 3.21]{haragus2010local} we obtain the following result.
\begin{theo}
\begin{enumerate}
\item Suppose that $ 41/30<\alpha^*<2$, $\rho^*=1$ and $F^{*2}=1+(1-\alpha^*)^2$. Then $a_2>0, b_2<0$, and for any $\mu$ sufficiently small, the system (\ref{syst2}) possesses the same types of solutions as in Theorem \ref{thmiomega2}.
\item Suppose that $ \alpha^* < 41/30,\  \alpha^* \neq 1$, $\rho^*=1$ and $F^{*2}=1+(1-\alpha^*)^2$. Then $b_2>0$.
\begin{enumerate}
\item For any $\mu$ sufficiently small such that $a_2 \mu >0$, there is one symmetric equilibrium and no other bounded solution.
\item For any $\mu$ sufficiently small such that $a_2 \mu <0$, the system (\ref{syst2}) has one symmetric equilibrium, a one-parameter family of periodic orbits and a two-parameter family of quasiperiodic orbits. Moreover, there exists a one-parameter family of homoclinic orbits to periodic orbits. 
\end{enumerate}
\end{enumerate}
\end{theo}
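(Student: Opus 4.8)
The plan is to reduce the whole statement to a sign discussion for the two normal‑form coefficients $a_2$ and $b_2$ produced by the Lemma above, and then to read off the conclusions from the classification of the reversible $(i\omega)^2$ bifurcation in \cite[Chapter 4, Theorem 3.21]{haragus2010local}. Note that the normal form (\ref{normal}) and its cubic truncation (\ref{trunciomega})--(\ref{trunciomega1}) are already available for $\beta=-1$ by the same construction as in Section \ref{s21}, so nothing has to be redone at that level. First I would fix the signs: on the bifurcation curve under consideration one has $\alpha^*<2$, hence $2-\alpha^*>0$ and $F^{*2}=1+(1-\alpha^*)^2>0$, so from
\[
a_2=\frac{\alpha^*-1}{(2-\alpha^*)^3},\qquad b_2=\frac{2F^{*2}(41-30\alpha^*)}{9(2-\alpha^*)^5}
\]
one reads off $\operatorname{sign}(a_2)=\operatorname{sign}(\alpha^*-1)$ and $\operatorname{sign}(b_2)=\operatorname{sign}(41-30\alpha^*)$. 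In particular $a_2\neq 0$ for $\alpha^*\neq 1$ and $b_2\neq 0$ for $\alpha^*\neq 41/30$, which are precisely the nondegeneracy hypotheses needed to apply the classification theorem.

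For part (i), the range $41/30<\alpha^*<2$ gives $a_2>0$ and $b_2<0$, i.e. exactly the sign configuration of Lemma \ref{lemma1} in the normal‑dispersion case. Since the conclusions of \cite[Chapter 4, Theorem 3.21]{haragus2010local} depend only on the signs of $a_2$, $b_2$ and of $a_2\mu$, together with the reversibility of (\ref{syst2}) (which holds here with the same symmetry $S$), the set of small bounded solutions coincides with that of Theorem \ref{thmiomega2}: a symmetric equilibrium, a one‑parameter family of periodic orbits and a two‑parameter family of quasiperiodic orbits for every small $\mu$, together with a pair of reversible homoclinic orbits to the equilibrium when $a_2\mu>0$ — and since $a_2>0$ this occurs exactly for $\mu>0$, as in Theorem \ref{thmiomega2}.

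For part (ii), the range $\alpha^*<41/30$, $\alpha^*\neq 1$ yields $b_2>0$, and I would invoke the complementary branch of the same theorem. When $b_2>0$ there is no homoclinic orbit to the symmetric equilibrium: for $a_2\mu>0$ the origin is the only small bounded orbit, which is (ii)(a); for $a_2\mu<0$ the truncated system carries a one‑parameter family of periodic orbits, a two‑parameter family of quasiperiodic orbits, and a one‑parameter family of orbits homoclinic to periodic orbits, all of which persist for the full system (\ref{syst2}) by the reversibility argument (a KAM‑type argument being used for the quasiperiodic ones), which is (ii)(b).

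The hard part is not really a part of this proof at all: the substantive work — the Maple‑assisted evaluation of $b_2$ — has already been done in the Lemma, and what remains is the bookkeeping of matching each sign configuration to the correct case of the abstract reversible $(i\omega)^2$ result and keeping track of the sign of $a_2$, which, in contrast with the normal‑dispersion case, now varies with $\alpha^*$. This is exactly why the conclusions of part (ii) are stated in terms of the sign of $a_2\mu$ rather than directly of $\mu$, and why the value $\alpha^*=1$ — where $a_2=0$ and the bifurcation degenerates beyond cubic order, falling outside the scope of \cite[Chapter 4, Theorem 3.21]{haragus2010local} — must be excluded.
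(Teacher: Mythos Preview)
Your proposal is correct and follows exactly the paper's approach: the paper simply notes that the signs of $a_2$ and $b_2$ change at $\alpha^*=1$ and $\alpha^*=41/30$, and then invokes \cite[Chapter 4, Theorem 3.21]{haragus2010local} to obtain the theorem. Your write-up is in fact more detailed than the paper's, spelling out explicitly why the conclusions in part (ii) must be phrased in terms of the sign of $a_2\mu$ and why $\alpha^*=1$ is excluded.
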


\noindent\textbf{Remark 2. }The case $\alpha^*=1$ does not enter in the analysis, since the coefficient $a_2$ vanishes. This issue can be avoided by fixing $\alpha=\alpha^*$, and taking $F^2=F^{*2}+\mu$ as a bifurcation parameter.

\vspace{0.3cm}
\noindent\textbf{Solutions. }As before we can compute the leading order terms in the expansions of the corresponding solutions of the equation (\ref{lle}). We obtain similar formulas for solutions of the truncated system (\ref{trunciomega})-(\ref{trunciomega1}). We give hereafter the expansions of the corresponding solutions of the equation (\ref{lle}).

First, in the case $ 41/30<\alpha^*<2$, the solutions have the same expansions as the one found in Section \ref{s21}. 

Next, in the case $ \alpha^* < 41/30$ and $a_2 \mu<0$, the family of periodic solutions of the system (\ref{syst}) corresponds to a family of periodic solutions of the equation (\ref{lle}), which are parametrized by some $K\in \mathbb R$ and have the form
\[
\psi_{\mu,K}(x)=P(kx),
\]
where the $2\pi-$periodic function $P$ is given by
\[
P(y)=\psi^*+2\sqrt{\frac{-a_2 \mu-K^2}{b_2}}\left(C-\frac{4\omega^*}{1+2\psi_r^* \psi_i ^*}+i\right)\cos( y)+O(\mu),
\]
and $k=\omega^*+K+O(|\mu|^{\frac 1 2}).$
%and
%\[
%\psi_{\mu, \varepsilon}(x)=\psi^*+2(C+i)\sqrt{|\mu|}\varepsilon \cos(\sqrt{-a_2\mu}x)\cos(\omega x)+O(|\mu|+\sqrt{|\mu|}\varepsilon^2),
%\]
%where $\varepsilon$ is a real and small parameter. 

Finally, the truncated system (\ref{trunciomega})-(\ref{trunciomega1}) possesses a one-parameter family of homoclinic solutions, parametrized by some $K\in \mathbb R$. For $K=0$, this solution is given by
\[
A(x)=\sqrt{\frac{-a_2 \mu}{b_2}}\tanh\left( \sqrt{\frac{-a_2 \mu}{2}}|x|\right)e^{i\omega^* x} ,\quad B(x)=-\frac{a_2 \mu}{\sqrt{2 b_2}}\,{\rm{sech}}^2 \left( \sqrt{\frac{-a_2 \mu}{2}}|x|\right)e^{i\omega^* x}.
\]
The corresponding solution of (\ref{lle}) has the form
\begin{multline*}
\psi_{\mu}(x)=\psi^*+2(C+i)\sqrt{\left|\frac{a_2 }{b_2}\right|}\tanh\left(\sqrt{\frac{-a_2 \mu}{2}} |x| \right)\cos(\omega^* x)|\mu|^{\frac 1 2}\\-\frac{a_2}{\sqrt{2 b_2}}\,{\rm{sech}}^2 \left( \sqrt{\frac{-a_2 \mu}{2}}|x|\right)\sin(\omega^* x)\mu+O(|\mu|^{\frac 3 2}).
\end{multline*}
%where
%\[
%R_0(x)=,\quad \theta_0(x)=\left(a_1-\frac{a_2 b_1}{b_2}\right)\mu x-\frac{b_1 \sqrt{-2a_2\mu}}{b_2}\tanh\left( \sqrt{\frac{-a_2 \mu}{2}}x\right).
%\]

\section{Reversible $0^2(i\omega)$ bifurcation }\label{s3}
In this section we compute the normal form for the $0^2 (i\omega)$ bifurcation in the cases of normal ($\beta=1$) and anomalous ($\beta=-1$) dispersion, and then discuss the existence of stationary solutions of the equation (\ref{lle}). We show the existence of periodic, quasiperiodic orbits and homoclinic connections to periodic orbits.
\subsection{Case of normal dispersion ($\beta=1$)}\label{o2iomega1}

Consider the $0^2(i\omega)$ bifurcation, which occurs in the case $\beta=1$ for
\[\alpha^* >2,\quad F^{*2}=F^2_+(\alpha^*)=\frac{2\alpha^*-\gamma^*}{3} \left(1+\left(\frac{\alpha^*+\gamma^*}{3}\right)^2\right),\]
where $\gamma^*=\sqrt{\alpha^{*2}-3}$, \textit{i.e.}, the point $(\alpha^*, F^{*2})$ lies on the dotted line in Figure \ref{diagram} (case $\beta=1$). In this case, we have $\rho^*=\rho_+(\alpha^*)=(2\alpha^*-\gamma^*)/3$, in particular
\[3\rho^{*2}-4\alpha^* \rho^*+\alpha^{*2}+1=0,\]
so that 0 is an algebraically double eigenvalue of $L_+$. The other two eigenvalues $\pm i \omega^*$ of $L_+$ satisfy
\[
\omega^{*2}=2\alpha^*-4\rho^*=\frac 2 3 \left( 2\sqrt{\alpha^*-3}-\alpha^*\right)>0.
\]

As in the previous section we start by computing a convenient basis of $\mathbb R ^4$. According to \cite[Chapter 4, Lemma 3.3]{haragus2010local}, there exists a basis $\left\lbrace\zeta_0,\zeta_1,{\mathrm{Re}}\ \zeta,{\mathrm{Im}}\ \zeta\right\rbrace$ of $\rr^4$, such that
\[ L_+\zeta_0 =0, \quad L_+\zeta_1 =\zeta_0, \quad L_+\zeta=i\omega^* \zeta,\quad
S\zeta_0=\zeta_0, \quad S\zeta_1 =-\zeta _1,\quad S\zeta =\overline \zeta.
\]
A direct computation gives
\[
\zeta_0=\begin{pmatrix}
1\\0 \\D \\0
\end{pmatrix}, \quad
\zeta_1=\begin{pmatrix}
0\\ 1 \\ 0 \\ D
\end{pmatrix}, \quad \zeta=\begin{pmatrix}
1 \\i\omega^*\\ 0 \\ 0
\end{pmatrix},
\]
in which
\[D=\frac{3\psi_r^{*2}+\psi_i^{*2}-\alpha^*}{1-2\psi_r^* \psi_i^*}=-\frac{\omega^{*2}}{2}.
\]
In this basis we represent a vector $u\in \rr^4$ by $$ u=A\zeta_0 + B\zeta_1 +C \zeta +\overline C \overline{\zeta}, $$ with $A, B \in \rr$ and $C\in \cc$, by identifying $\rr^4$ with the space $\dis \rr^2 \times \widetilde {\rr^2}$, where
$\widetilde{\rr^2}=\left\lbrace  (C,\overline C), C\in \cc \right\rbrace.$

\vspace{0.3cm}
\noindent\textbf{Normal form.} The vector field in the reversible system (\ref{syst2}) is of class $\mathcal C^{\infty}$ on $ \rr^4 \times \rr$ and satisfies
\[
R_+(0,0)=0,\quad {\mathrm{d}}_U R_+(0,0)=0.
\]
We apply to the system (\ref{syst2}) the normal form theorem for reversible systems in the case of a $0^2(i\omega)$ bifurcation (see \cite[Chapter 3, Theorem 3.4]{haragus2010local} and \cite[Chapter 4, Lemma 3.5]{haragus2010local}). For any positive integer $N \geqslant 2$, there exist two neighborhoods $V_1$ and $V_2$ of 0 in $\rr^2 \times \widetilde{\rr^2}$ and $\rr$, respectively, and a polynomial $\Phi:V_1\times V_2 \longrightarrow \rr^4$ of degree $N$ with the properties:
\begin{enumerate}
\item $\Phi$ satisfies
\[\Phi(0,0,0,0,0)=0,\quad \partial_{(A,B,C,\overline C)}\Phi(0,0,0,0,0)=0, 
\]
and 
\[
\Phi(A,-B,\overline C,C,\mu)=S\Phi(A,B,C,\overline{C},\mu).
\]
\item For $(A,B,C,\overline C)\in V_1$, the change of variable 
\begin{equation}
U=A\zeta_0+B\zeta_1+ C\zeta +\overline{C}\overline{\zeta}+\Phi(A,B,C,\overline C, \mu)\label{chvar2}
\end{equation}
transforms the equation (\ref{syst2}) into the normal form
\begin{eqnarray}\label{normform}
\frac{\diff A}{\diff x}&=&B \nonumber \\
\frac{\diff B}{\diff x}&=&P(A,|C|^2,\mu)+\rho_B(A,B,C,\overline{C},\mu)\\
\frac{\diff C}{\diff x}&=&i\omega^* C+iCQ(A,|C|^2,\mu)+\rho_C(A,B,C,\overline{C},\mu)\nonumber,
\end{eqnarray}
where $P$ and $Q$ are real-valued polynomials of degree $N$ and $N-1$ in $(A,B,C,\overline C)$, respectively. Moreover the remainders $\rho_B$ and $\rho_C$ satisfy
\begin{eqnarray*}
\rho_B(A,-B,\overline{C},C,\mu)&=&\rho_B(A,B,C,\overline C,\mu),\\
\rho_C(A,-B,\overline{C},C,\mu)&=&-\overline{\rho_C}(A,B,C,\overline C,\mu),
\end{eqnarray*}
with the estimate
\[
\left|\rho_B(A,B,C,\overline C,\mu)\right|+\left|\rho_C(A,B,C,\overline C,\mu)\right|=o\left(\left(|A|+|B|+|C|\right)^N\right).
\]
\end{enumerate}
Consider the Taylor expansions of $P$ and $Q$ at order 2:
\begin{eqnarray}\label{expp}
P(A,|C|^2,\mu)=a_1\mu+b_1 A^2+c_1|C|^2+O(|\mu|^2+(|\mu|+|A|+|C|)^3),\\
Q(A,|C|^2,\mu)=a_2\mu+b_2 A+c_2 |C|^2+O((|\mu|+|A|+|C|^2)^2).\nonumber
\end{eqnarray}

%and the truncated system
%\begin{eqnarray*}
%\frac{\diff A}{\diff x}&=&B\\
%\frac{\diff B}{\diff x}&=&a_1\mu+b_1A^2+c_1|C|^2\\
%\frac{\diff C}{\diff x}&=&i\omega C+iC(a_2 \mu+b_2 A+c_2 |C|^2).
%\end{eqnarray*}
The dynamics of the system (\ref{normform}) depends on the signs of the coefficients $a_1$, $b_1$ and $c_1$, provided that they do not vanish \cite[Chapter 4, Theorem 3.10]{haragus2010local}.

\begin{lem}\label{lemma2}
The coefficients $a_1$, $b_1$ and $c_1$ in the expansion of $P$ in (\ref{expp}) are given by

\begin{eqnarray*}
a_1= -\frac {9F^*(\alpha^*+\gamma^*)}{2(\alpha^{*3}-9\alpha^*+(\alpha^{*2}+6)\gamma^*)}
< 0,
\end{eqnarray*}
\begin{eqnarray*}
b_1=\frac {3F^*(2\alpha^{*5}-18\alpha^*+(2\alpha^{*4}+3\alpha^{*2}+9)\gamma^*)}{2(\alpha^{*2}+3+\alpha^* \gamma^*)(\alpha^{*3}-9\alpha^*+(\alpha^{*2}+6)\gamma^*)}
>0,
\end{eqnarray*}
\begin{eqnarray*}
c_1=\frac{9F^*(\alpha^*+\gamma^*)}{\alpha^{*3}-9\alpha^*+(\alpha^{*2}+6)\gamma^*}
>0.
\end{eqnarray*}
\end{lem}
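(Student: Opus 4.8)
The strategy is to compute the Taylor coefficients $a_1$, $b_1$, $c_1$ of the normal form polynomial $P$ by using the explicit identification formulas for reversible $0^2(i\omega)$ normal forms from \cite[Appendix D.1]{haragus2010local}, exactly as was done for the $(i\omega)^2$ case in Lemma \ref{lemma1}. First I would introduce the dual vector $\zeta_0^*$ characterized by $\langle\zeta_0,\zeta_0^*\rangle=1$, $\langle\zeta_1,\zeta_0^*\rangle=0$, $\langle\zeta,\zeta_0^*\rangle=\langle\overline\zeta,\zeta_0^*\rangle=0$, $S\zeta_0^*=\zeta_0^*$ and $L_+^*\zeta_0^*=0$; a direct computation from the matrix $L_+$ gives $\zeta_0^*$ explicitly in terms of $\psi_r^*$, $\psi_i^*$, $\alpha^*$, and $D=-\omega^{*2}/2$. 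The coefficient identities then read $a_1=\langle R_+^{0,1},\zeta_0^*\rangle$ (the contribution of the $\mu$-linear part), $b_1=\langle R_+^{2,0}(\zeta_0,\zeta_0),\zeta_0^*\rangle$, and $c_1=2\langle R_+^{2,0}(\zeta,\overline\zeta),\zeta_0^*\rangle$ — no auxiliary $\Phi$-terms enter at this order because the relevant intermediate monomials $\zeta_0$-type corrections either vanish by the structure of $R_+$ (whose first and third components are zero) or are projected out by $\zeta_0^*$. This is the crucial simplification compared to the $b_2$ computation in Lemma \ref{lemma1}.

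The key steps, in order, are: (1) write down $\zeta_0^*$ explicitly and verify the six defining relations; (2) evaluate $R_+^{0,1}$, $R_+^{2,0}(\zeta_0,\zeta_0)$ and $R_+^{2,0}(\zeta,\overline\zeta)$ using the formulas for $R_+^{0,1}$ and $R_+^{2,0}$ recalled in Section \ref{s1}, substituting $\zeta_0=(1,0,D,0)^{\mathsf T}$ and $\zeta=(1,i\omega^*,0,0)^{\mathsf T}$; (3) take the Hermitian pairings with $\zeta_0^*$ to obtain $a_1$, $b_1$, $c_1$ as rational expressions in $\psi_r^*$, $\psi_i^*$, $\omega^{*2}$, $\alpha^*$; (4) eliminate $\psi_r^*$, $\psi_i^*$ in favour of $\alpha^*$ and $\gamma^*=\sqrt{\alpha^{*2}-3}$ using $\psi_r^*=F^*/(1+(\rho^*-\alpha^*)^2)$, $\psi_i^*=F^*(\rho^*-\alpha^*)/(1+(\rho^*-\alpha^*)^2)$, $\rho^*=\rho_+(\alpha^*)=(2\alpha^*-\gamma^*)/3$, together with $\omega^{*2}=\tfrac23(2\gamma^*-\alpha^*)$ (note: here I follow the excerpt's stated value; one should double-check the sign/form against $\omega^{*2}=2\alpha^*-4\rho^*$), and simplify with Maple to reach the closed forms in the statement; (5) determine the signs. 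For the signs one uses $\alpha^*>2$, hence $\gamma^*=\sqrt{\alpha^{*2}-3}\in(1,\alpha^*)$, so $\alpha^*+\gamma^*>0$ and the denominators $\alpha^{*3}-9\alpha^*+(\alpha^{*2}+6)\gamma^*$ and $\alpha^{*2}+3+\alpha^*\gamma^*$ are positive (the former because for $\alpha^*>2$ one has $(\alpha^{*2}+6)\gamma^*>(\alpha^{*2}+6)\cdot 1 \geq \alpha^*(\alpha^{*2}-9)$ can be checked directly, or more simply note it equals $\tfrac32 F^{*2}\cdot(\text{positive factor})$ up to the $9$-scaling already visible in $c_1$); likewise the numerator of $b_1$ is manifestly positive since every monomial in $\alpha^*$ and $\gamma^*$ has a positive coefficient and $\alpha^{*5}>9\alpha^*$ for $\alpha^*>2$. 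Thus $a_1<0$, $b_1>0$, $c_1>0$.

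The main obstacle is the same as in Lemma \ref{lemma1}: the algebra is heavy, and one must be careful that the intermediate-order terms $\Phi_{00001}$-analogues do not contribute to $a_1$, $b_1$, $c_1$ at quadratic order — this requires checking the identification formulas of \cite[Appendix D.1]{haragus2010local} for the $0^2(i\omega)$ case rather than blindly reusing the $(i\omega)^2$ ones, since the normal form structure (two real variables $A,B$ plus one complex $C$) is different. A secondary subtlety is the bookkeeping of which power of $\gamma^*$ appears where when clearing the square roots; I would let Maple carry out the rationalization and simplification, and only verify the final sign analysis by hand as sketched above. Once the three coefficients and their signs are established, the lemma is proved; the dynamical consequences then follow from \cite[Chapter 4, Theorem 3.10]{haragus2010local}.
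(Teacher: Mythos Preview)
Your overall strategy---project the quadratic terms of the vector field onto an appropriate dual vector, evaluate the three pairings, and simplify with Maple---is exactly what the paper does. However, the dual vector you propose is mis-specified in a way that is not merely notational.

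You ask for a vector $\zeta_0^*$ with $\langle\zeta_0,\zeta_0^*\rangle=1$, $\langle\zeta_1,\zeta_0^*\rangle=0$, and $L_+^*\zeta_0^*=0$. These conditions are mutually inconsistent: since $L_+\zeta_1=\zeta_0$, any $v\in\ker L_+^*$ satisfies $\langle\zeta_0,v\rangle=\langle L_+\zeta_1,v\rangle=\langle\zeta_1,L_+^*v\rangle=0$, so no vector in $\ker L_+^*$ can pair to $1$ with $\zeta_0$. More to the point, the coefficients $a_1,b_1,c_1$ sit in the $\dot B$-equation of the normal form, and $B$ is the coefficient of $\zeta_1$ in the decomposition $U=A\zeta_0+B\zeta_1+C\zeta+\overline C\,\overline\zeta$; extracting $\dot B$ therefore requires the dual of $\zeta_1$, not of $\zeta_0$. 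The paper accordingly uses $\zeta_1^*$ determined by $\langle\zeta_0,\zeta_1^*\rangle=0$, $\langle\zeta_1,\zeta_1^*\rangle=1$, $L_+^*\zeta_1^*=0$, which works out to $\zeta_1^*=\tfrac1D(0,0,0,1)^{\mathsf T}$, and then
\[
a_1=\langle R_+^{0,1},\zeta_1^*\rangle,\qquad b_1=\langle R_+^{2,0}(\zeta_0,\zeta_0),\zeta_1^*\rangle,\qquad c_1=2\langle R_+^{2,0}(\zeta,\overline\zeta),\zeta_1^*\rangle,
\]
yielding $a_1=-\psi_i^*/D$, $b_1=\bigl(2D\psi_r^*+(3D^2+1)\psi_i^*\bigr)/D$, $c_1=2\psi_i^*/D$ before the Maple simplification. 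With your normalisation swapped to this, every subsequent step of your plan (the evaluation of $R_+^{2,0}$ on the basis vectors, the elimination of $\psi_r^*,\psi_i^*$ in favour of $\alpha^*,\gamma^*$, and the sign analysis for $\alpha^*>2$) matches the paper's proof.
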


\begin{proof}
%As in the proof of Lemma \ref{lemma1}, we start with the expansions of $R_+$ and $\Phi$:
%\[
%R_+(U,\mu)=\sum_{1\leqslant n+p\leqslant N} \mu ^p R_{+}^{ n,p}(U^{(n)})+o\left( (\|U\|+|\mu|)^N\right),
%\]
%where $R_{+}^{ n,p}$ are $n-$linear maps on $(\rr^4)^n$, $U^{(n)}=(U,...,U)\in (\rr^4)^n$, and
%\[
%\Phi(A,B,C,\overline{C}, \mu)=\sum_{1\leqslant n+p+q+r+s\leqslant N}A^nB^p C^q \overline{C}^r \mu^s \Phi_{npqrs},
%\]
%where $\displaystyle \Phi_{npqrs}=\frac{\partial^{n+p+q+r+s}\Phi}{\partial A^n \partial B^p \partial C^q \partial \overline{C}^r \mu ^s}$.
%
%The change of variable (\ref{chvar2}) in the system (\ref{syst}) gives
%\begin{multline}
%B\pa_A \Phi+i\omega C\pa_C \Phi-i\omega \overline C \pa_{\overline C} \Phi+(\zeta_1+\pa_B \Phi)P(A,|C|^2,\mu)+\\ \left(iC(\zeta+\pa_C \Phi)-i\overline C(\overline{\zeta}+\pa_{\overline C} \Phi)\right)Q(A,|C|^2,\mu)\\=L_+\Phi+R_+(A\zeta_0+B\zeta_1+C\zeta+\overline C \overline{\zeta}+\Phi,\mu).\label{equa}
%\end{multline} 
%Then using the expansions of $R_+$, $\Phi$, $P$ and $Q$ and identifying the terms of orders $O(\mu)$, $O(A^2)$ and $O(C\overline C)$ in (\ref{equa}), we obtain the following equalities:
%\begin{eqnarray}
%a_1\zeta_1 &=& L_+\Phi_{00001}+R_+^{0,1},\label{a1}\\
%b_1\zeta_1&=&L_+\Phi_{20000}+R_+^{2,0}(\zeta_0,\zeta_0),\label{b1}\\
%c_1\zeta_1&=&L_+\Phi_{00110}+2R_+^{2,0}(\zeta,\overline \zeta)\label{c1}.
%\end {eqnarray}
We consider the vector $\zeta_1^*$ which satisfies 
\[
\left\langle\zeta_0,\zeta_1^*\right\rangle=0 \qquad \left\langle\zeta_1,\zeta_1^*\right\rangle=1, \qquad L_+^* \zeta_1^*=0,
\]
where $L_+^*$ is the adjoint operator of $L_+$.
A straightforward computation gives
\[
\zeta_1^*= \frac 1 D \begin{pmatrix}
0\\ 0 \\ 0 \\1
\end{pmatrix}.
\]
According to \cite[Chapter 4, Section 4.3.1]{haragus2010local}, the coefficients $a_1$, $b_1$ and $c_1$ are given by the formulas
\[
a_1= \left\langle R_+^{0,1}, \zeta_1^* \right\rangle,\quad b_1=\left\langle R_+^{2,0}(\zeta_0,\zeta_0),\zeta_1^* \right\rangle,\quad
c_1=\left\langle 2R_+^{2,0}(\zeta, \overline{\zeta}), \zeta_1^* \right\rangle,
\]
where
\[
R_+^{0,1}=\begin{pmatrix}
0 \\ -\psi_r^* \\ 0 \\ -\psi_i^*
\end{pmatrix},\quad R_+^{2,0}(\zeta_0,\zeta_0)=\begin{pmatrix}
0 \\ (D^2+3)\psi_r^*+2D\psi_i^* \\ 0 \\ 2D \psi_r^* +(3D^2+1)\psi_i^*
\end{pmatrix},\quad
R_+^{2,0}(\zeta,\overline \zeta)=\begin{pmatrix}
0 \\ 3\psi_r^* \\0 \\ \psi_i^*
\end{pmatrix}.
\]
We find 
\[
a_1=-\frac{\psi_i^*}{D}, \qquad b_1=\frac{2D\psi_r^*+(3D^2+1)\psi_i^*}{D}, \qquad c_1=\frac{2\psi_i^*}{D}.
\]
The explicit expressions of $a_1$, $b_1$ and $c_1$ in terms of $\alpha^*$ and $F^*$ and their signs are obtained with the symbolic package Maple.
\end{proof}
Here again, the reversibility of the system (\ref{syst2}) is a crucial argument in the proofs of the existence of solutions. Following \cite[Chapter 4, Theorem 3.10]{haragus2010local}, 
%and \cite{lombardi2000oscillatory} 
we state the following result:
\begin{theo}\label{thm}
Suppose that $\alpha^*>2$ and $F^{*2}=F_+^2(\alpha^*)$. In a neighborhood of $0$ in $\rr^4$ the following properties hold:
\begin{enumerate}
\item \label{thmii} For any $\mu>0$ sufficiently small, the system (\ref{syst2}) possesses two equilibria, a center and a saddle-center, together with two one-parameter families of periodic orbits, called periodic orbits of the first kind, parametrized by their size $r$, for $r<r^*(\mu)=O( \mu^{\frac 1 2})$, which tend to the two equilibria when $r$ tends to 0. For any periodic orbit in the family which tends to the saddle-equilibrium, with size $r$ not too small, $r> r_*(\mu)$, there is a pair of reversible homoclinic orbits connecting the periodic orbit to itself. 
In addition there are also periodic orbits called periodic orbits of the second kind and quasiperiodic orbits.
\item For any $\mu<0$ sufficiently small, the system (\ref{syst2}) has no bounded solution. 
\end{enumerate}
\end{theo}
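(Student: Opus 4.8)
The plan is to obtain the statement as a direct application of the general analysis of the reversible $0^2(i\omega)$ bifurcation in \cite[Chapter 4, Theorem 3.10]{haragus2010local}, once the relevant non-degeneracy conditions have been checked. The normal form (\ref{normform}) has already been computed, so the only input still needed is the sign pattern of the leading coefficients of $P$; this is exactly what Lemma \ref{lemma2} provides, namely $a_1<0$, $b_1>0$ and $c_1>0$. In particular $a_1 b_1<0$ and $a_1 c_1<0$, which is the configuration of signs that drives the bifurcation; the coefficients $a_2$, $b_2$, $c_2$ of $Q$ in (\ref{expp}) play no role in the leading-order description, so they need not be computed.

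First I would analyze the truncated system obtained from (\ref{normform}) by dropping the remainders $\rho_B$, $\rho_C$ and keeping only the quadratic part of $P$:
\[
\frac{\diff A}{\diff x}=B,\qquad \frac{\diff B}{\diff x}=a_1\mu+b_1A^2,\qquad \frac{\diff C}{\diff x}=i\omega^* C.
\]
The plane $\{C=0\}$ is invariant, and the reversible planar system it carries is Hamiltonian, with first integral $H=\frac12 B^2 - a_1\mu A - \frac13 b_1 A^3$. For $\mu>0$ one has $a_1\mu<0$, so the equilibria solve $a_1\mu+b_1A^2=0$, giving $A_\pm=\pm\sqrt{-a_1\mu/b_1}=O(\mu^{1/2})$: a center at $A_-$ surrounded by a one-parameter family of periodic orbits, and a saddle at $A_+$ whose level set $\{H=H(A_+)\}$ is a homoclinic loop enclosing that family. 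For $\mu<0$ one has $a_1\mu>0$, hence $a_1\mu+b_1A^2>0$ for every $A$, so $B$ is strictly increasing along every orbit of the truncated planar system and there is no bounded orbit.

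Next I would explain how these features survive when the remainder terms and the coupling to the oscillatory variable $C$ are restored, i.e. on the full system (\ref{syst2}), locally near $0$ in $\rr^4$. The two planar equilibria persist as equilibria of (\ref{syst2}): the origin and a second $S$-symmetric one at distance $O(\mu^{1/2})$; the linearization at the first has spectrum $\{\pm i\omega^*\}$ together with a purely imaginary pair of order $\mu^{1/4}$ (a center), and at the second $\{\pm i\omega^*\}$ together with a real pair of order $\mu^{1/4}$ (a saddle-center). The periodic orbits around the planar center persist by reversibility and give the periodic orbits of the first kind, parametrized by their size $r$ with $r<r^*(\mu)=O(\mu^{1/2})$; the $\pm i\omega^*$ pair generates, via the Lyapunov--Devaney theorem for reversible systems, the periodic orbits of the second kind, and combining the two frequencies through a normally hyperbolic / KAM argument yields the two-parameter family of quasiperiodic orbits. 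The homoclinic loop of the truncated system is the delicate feature: generically it does not persist as a homoclinic orbit to the saddle-center, because the oscillatory component forces a small residual oscillation at infinity, but reversibility forces the persistence of a pair of reversible homoclinic orbits connecting each sufficiently large periodic orbit of the first kind ($r>r_*(\mu)$) to itself. For $\mu<0$, the monotonicity of $B$ at leading order propagates to the full system and yields the absence of bounded solutions. Each of these steps is an instance of \cite[Chapter 4, Theorem 3.10]{haragus2010local}, whose hypotheses we have verified.

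The main obstacle is precisely this last point — deciding which homoclinic connections of the truncated planar system survive the coupling to the center oscillation — and the whole argument hinges on the reversible structure of (\ref{syst2}) recalled in Section \ref{s1} (see also \cite[Section III]{iooss1993perturbed}). The counting and nature of the equilibria, the families of periodic and quasiperiodic orbits, and the dichotomy between $\mu>0$ and $\mu<0$ then follow from the sign pattern $a_1<0$, $b_1>0$, $c_1>0$ established in Lemma \ref{lemma2}, so that the proof reduces to invoking \cite[Chapter 4, Theorem 3.10]{haragus2010local} with these signs in hand.
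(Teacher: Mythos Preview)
Your approach is essentially the paper's: once Lemma \ref{lemma2} gives $a_1<0$, $b_1>0$, $c_1>0$, the theorem is a direct instance of \cite[Chapter 4, Theorem 3.10]{haragus2010local}, and reversibility is the key structural ingredient. The paper states no proof beyond this invocation; the truncated-system analysis you sketch is essentially what the paper places \emph{after} the theorem under the heading ``Solutions''.

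Two corrections, however. First, for $\mu\neq 0$ the origin is not an equilibrium of (\ref{syst2}) (since $R_+^{0,1}\neq 0$); both equilibria sit at distance $O(\mu^{1/2})$. Second, and more importantly, you have swapped the two kinds of periodic orbits. In the paper's (and Haragus--Iooss') terminology, the periodic orbits of the \emph{first kind} are the Lyapunov orbits generated by the $\pm i\omega^*$ pair around \emph{each} of the two equilibria, namely $(\pm A_K,0,\sqrt{K}e^{i\omega^* x},\sqrt{K}e^{-i\omega^* x})$ --- this is why there are \emph{two} one-parameter families, one per equilibrium, and why the homoclinic orbits connect a first-kind orbit (around the saddle-center) to itself. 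The periodic orbits of the \emph{second kind} are the planar periodic orbits with $C=0$ surrounding the center in the $(A,B)$-plane. Your identification (planar orbits $\to$ first kind, Lyapunov orbits $\to$ second kind) is the reverse, and is inconsistent with the theorem statement itself, which asserts two families of first-kind orbits tending to the two equilibria. Once this is fixed the argument is fine.
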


\noindent\textbf{Solutions. }As before we can compute the leading order terms in the Taylor expansion of the corresponding solutions of the equation (\ref{lle}). 
Consider the truncated system
\begin{eqnarray}\label{trunciomega2}
\frac{\diff A}{\diff x}&=&B \nonumber \\
\frac{\diff B}{\diff x}&=&a_1\mu+b_1 A^2+c_1|C|^2\\
\frac{\diff C}{\diff x}&=&i\omega^* C\nonumber,
\end{eqnarray}
and assume that $\mu >0$. Notice that $K=|C|^2$ is a first integral of this system.

The system (\ref{trunciomega2}) has two equilibria $(\pm A_0,0,0,0)$, with $A_0=\sqrt{-a_1 \mu/b_1}$. The equilibrium $(A_0,0,0)$ is a saddle and $(-A_0,0,0)$ is a center.
These equilibria correspond to constant solutions of the equation (\ref{lle}) with parameters $\alpha^* +\mu$ and $F^*$, and read
\[
\psi(x)=\psi^*\pm(1+iD)\sqrt{\frac{-a_1 }{b_1}}\mu^{\frac 1 2}+O(\mu).
\]

In addition, the system (\ref{trunciomega2}) possesses two families of periodic orbits $(\pm A_K, 0, C(x), \overline C (x))$, where
\[
A_K=\sqrt{\frac{-a_1 \mu -c_1 K}{b_1}},\quad C(x)=\sqrt K e^{i \omega^* x},
\]
%\[
%A(x)=\frac{\nu ^2}{2b_1}\left(1-3A_{\pm,K} \right),\]
%\[C(x)=\frac 1 2 \left(\sqrt K \nu^2\sin\left( (\omega+a\nu^2+b\nu^2A_{\pm,K})x\right) -i \sqrt K \nu^2 \cos\left( (\omega+a\nu^2+b\nu^2A_{\pm,K})x\right) \right),
%\]
%where \[ A_{\pm,K}=\frac{1\pm \sqrt{1-6cK}}{3} ,\quad K>0,\quad \nu^4=-4a_1b_1\mu,\quad a= \frac{b_2}{2b_1},\quad b=-\frac{3b_2}{2b_1},\quad c=-\frac{b_1c_1}{6}.\] 
%The coefficient $b_2$ is given by
%\[
%b_2=-i\left\langle 2R_+ ^{2,0}(\zeta_0,\zeta),\zeta^*\right\rangle=\left(\frac{3}{\omega}+2D \right)\psi_r^*+\left(\frac{D}{\omega}+\frac{2}{\omega^3} \right)\psi_i^*,
%\]
%where the vector 
%\[\zeta^*=\begin{pmatrix}
%1/2 \\ -i/2\omega \\ 1/\omega^2 \\ -i/\omega^3
%\end{pmatrix}\] 
%is orthogonal to the range of the operator $L_+-i\omega$ and satisfies $\left\langle\zeta,\zeta^*\right\rangle=1$. 
in which $K$ is such that $a_1 \mu +c_1 K<0$. These periodic orbits are called periodic orbits of the first kind. The corresponding solutions of the equation (\ref{lle}) are periodic solutions of the form
\[
\psi_{\mu,\pm,K}(x)=P_1(kx),
\]
where the $2\pi-$periodic function $P_1$ has the expansion
\[
P_1(y)=\psi^*\pm (1+Di)\sqrt{\frac{-a_1 \mu -c_1 K}{b_1}}+2\cos(y)+O(\mu)
\]
and $k=\omega^*+O(\mu^{\frac 1 2})$.

Next, the system (\ref{trunciomega2}) admits a third family of periodic orbits, which are obtained with $C=0$ and are called periodic orbits of the second kind. These solutions have the form $(A(x), B(x), 0,0)$, where $B=\diff A /\diff x$ and $A$ satisfies the differential equation
\[
\frac{\diff^2 A}{\diff x^2}=a_1 \mu + b_1 A^2.
\]
Remark that the equilibrium $-A_0$ is a center for this equation, and is surrounded by a one-parameter family of periodic orbits. The solutions which are close to the center have the expansion
\[
A(x)=-\sqrt{\frac{-a_1\mu}{b_1}}+\varepsilon \sqrt{\mu}\cos(\sqrt 2 (-a_1b_1\mu)^{\frac 1 4} x)+O(\varepsilon^2 ),
\]
%\[
%A(x)=-\frac{\nu^2}{2b_1}-\frac{3\nu^2}{2b_1}\varepsilon \cos(\nu x)+O(\nu^2),\qquad B=\frac{\diff A}{\diff x},
%\]
where $\varepsilon$ is a real and small parameter.
The corresponding solutions of the equation (\ref{lle}) have the form
\[
\psi_{\mu,\varepsilon}(x)=P_2(kx),
\]
where the $2\pi-$periodic function $P_2$ is given by
\[
P_2(y)=\psi^*+(1+Di)\left(-\sqrt{\frac{-a_1}{b_1}}+\varepsilon \cos\left(y \right) \right)\mu^{\frac 1 2}+O(\mu+\varepsilon(\varepsilon+\mu))
\]
and $k=\sqrt 2 (-a_1b_1\mu)^{\frac 1 4}+O(\mu^{\frac 1 2})$.

Finally, the saddle $(A_0,0,0,0)$ is surrounded by the one-parameter family of periodic orbits $(A_K,0,C(x), \overline C (x))$ above, and to each of this periodic orbits there exists a one-parameter family of homoclinic orbit connecting this periodic orbit to itself. This family of homoclinic orbits reads
\[
A(x)=A_K\left(1-3\,{\rm{sech}}^2\left(\delta_K x\right)\right), \qquad B(x)=\frac{\diff A}{\diff x},\qquad C(x)=\sqrt{K}e^{i (\omega^* x+ \varphi)},
\]
where $\delta_K=\sqrt{b_1 A_K/2}$ and $\varphi \in \mathbb R / 2\pi \mathbb Z$.

The question of the persistence of these solutions as solutions of the full system (\ref{normform}) is a delicate issue, which has been studied in \cite[Chapter 7, Section 7.3]{lombardi2000oscillatory}. It turns out that the reversible orbits (\textit{i.e.}, the orbits obtained with $\varphi=0, \pi$) persist, provided that $K$ is not too small.
In particular, the homoclinic solution $(A(x), B(x),0,0)$ of (\ref{trunciomega2}), with \[
A(x)=A_0 \left(1-3\,{\rm{sech}}^2\left(\delta_0 x\right)\right), \qquad B(x)=\frac{\diff A}{\diff x},
\]
does not persist as a homoclinic solution of the system (\ref{normform}) (see \cite[Chapter 7, Section 7.4]{lombardi2000oscillatory}).

The corresponding solutions of the equation (\ref{lle}) have the form
\[
\psi_{\mu, K, \pm}(x)=\psi^*+(1+iD)\sqrt{\frac{-a_1 \mu -c_1 K}{b_1}}\left(1-3\,{\rm{sech}}^2 \left(\delta_K x\right)\right)\pm2\sqrt K \cos(kx)+O(\mu),
\]
where $k=\omega^*+O(\mu^{\frac 1 2})$.

\subsection{Case of anomalous dispersion ($\beta=-1$)}\label{o2iomega2}

We keep the notations of Section \ref{o2iomega1}. In this case we have to consider two cases for the parameters $\alpha^*$ and $F^*$:

\begin{enumerate}
\item $ \alpha^* \in \left(\sqrt 3 , 2\right)$ and $\dis F^{*2}=F^2_+(\alpha^*)$,\label{case3}
\item$\dis \alpha^* > \sqrt 3$ and $\dis F^{*2}=F_{-}^2(\alpha^*)$, \label{case4}
\end{enumerate}
that is, $(\alpha^*, F^{*2})$ lies on the dotted line on Figure \ref{diagram} (case $\beta=-1$). The case $\alpha^*=\sqrt 3$ is excluded here, because at this point one of the coefficient of the normal form computed hereafter vanishes.

In both cases, we have \[3\rho^{*2}-4\alpha^* \rho^*+\alpha^{*2}+1=0,\]
so that 0 is an algebraically double eigenvalue of $L_-$. The other two eigenvalues $\pm i \omega^*$ of $L_-$ satisfy
\[
\omega^{*2}=4\rho^*-2\alpha^*.
\]
\noindent\textbf{Case \ref{case3}.} In this case $\rho^*=\rho_+(\alpha^*)$ and

\[
\omega^{*2}=\frac 2 3 (\alpha^*-2\gamma^*)=\frac{2}{3}(\alpha^*-2\sqrt{\alpha^{*2}-3}).
\]
The system (\ref{syst2}) has the normal form (\ref{normform}), and the coefficients $a_1$, $b_1$ and $c_1$ can be computed as in the proof of Lemma \ref{lemma2}. We obtain the following result:

\begin{lem}The coefficients $a_1$, $b_1$ and $c_1$ in the expansion of $P$ in (\ref{expp}) are given by
\begin{eqnarray*}
a_1= \frac {9F^*(\alpha^*+\gamma^*)}{2(\alpha^{*3}-9\alpha^*+(\alpha^{*2}+6)\gamma^*)}  < 0,
\end{eqnarray*}

\begin{eqnarray*}
b_1= -\frac {3F^*(2\alpha^{*5}-18\alpha^*+(2\alpha^{*4}+3\alpha^{*2}+9)\gamma^*)}{2(\alpha^{*2}+3+\alpha^* \gamma^*)(\alpha^{*3}-9\alpha^*+(\alpha^{*2}+6)\gamma^*)}> 0,
\end{eqnarray*}

\begin{eqnarray*}
c_1=-\frac{9F^*(\alpha^*+\gamma^*)}{\alpha^{*3}-9\alpha^*+(\alpha^{*2}+6)\gamma^*}
>0.
\end{eqnarray*}
\end{lem}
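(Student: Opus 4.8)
The plan is to reproduce the scheme used in the proof of Lemma~\ref{lemma2}, with $L_+$ replaced by $L_-$ and the nonlinear coefficients replaced by $R_-^{0,1}=-R_+^{0,1}$ and $R_-^{2,0}=-R_+^{2,0}$, all evaluated at $\rho^*=\rho_+(\alpha^*)=(2\alpha^*-\gamma^*)/3$, $F^{*2}=F_+^2(\alpha^*)$, $\omega^{*2}=\tfrac23(\alpha^*-2\gamma^*)$. First I would construct the basis $\{\zeta_0,\zeta_1,\re\ \zeta,\im\ \zeta\}$ of $\rr^4$ adapted to $L_-$ and $S$ provided by \cite[Chapter~4, Lemma~3.3]{haragus2010local}, i.e.\ solving $L_-\zeta_0=0$, $L_-\zeta_1=\zeta_0$, $L_-\zeta=i\omega^*\zeta$ together with $S\zeta_0=\zeta_0$, $S\zeta_1=-\zeta_1$, $S\zeta=\overline\zeta$. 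Using the identity $3\rho^{*2}-4\alpha^*\rho^*+\alpha^{*2}+1=0$ together with the explicit formulas for $\psi_r^*,\psi_i^*$, one checks that $\psi_r^*\psi_i^*=-\tfrac12$ and $\psi_r^{*2}+3\psi_i^{*2}-\alpha^*=0$; a direct computation then gives
\[
\zeta_0=\begin{pmatrix}1\\0\\D\\0\end{pmatrix},\qquad \zeta_1=\begin{pmatrix}0\\1\\0\\D\end{pmatrix},\qquad \zeta=\begin{pmatrix}1\\i\omega^*\\0\\0\end{pmatrix},\qquad D=2\rho^*-\alpha^*=\frac{\omega^{*2}}{2}.
\]

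Next I would determine the adjoint vector $\zeta_1^*$ fixed by $\langle\zeta_0,\zeta_1^*\rangle=0$, $\langle\zeta_1,\zeta_1^*\rangle=1$, $L_-^*\zeta_1^*=0$; since $2\psi_r^*\psi_i^*+1=0$ and $\psi_r^{*2}+3\psi_i^{*2}-\alpha^*=0$, the same vector as in the proof of Lemma~\ref{lemma2} still works, namely $\zeta_1^*=\frac1D(0,0,0,1)^{\mathrm T}$. By \cite[Chapter~4, Section~4.3.1]{haragus2010local} the coefficients are
\[
a_1=\langle R_-^{0,1},\zeta_1^*\rangle,\qquad b_1=\langle R_-^{2,0}(\zeta_0,\zeta_0),\zeta_1^*\rangle,\qquad c_1=\langle 2R_-^{2,0}(\zeta,\overline\zeta),\zeta_1^*\rangle ,
\]
and substituting $R_-^{0,1}=(0,\psi_r^*,0,\psi_i^*)^{\mathrm T}$, $R_-^{2,0}(\zeta_0,\zeta_0)=-\big(0,(D^2+3)\psi_r^*+2D\psi_i^*,0,2D\psi_r^*+(3D^2+1)\psi_i^*\big)^{\mathrm T}$, $R_-^{2,0}(\zeta,\overline\zeta)=-(0,3\psi_r^*,0,\psi_i^*)^{\mathrm T}$ yields
\[
a_1=\frac{\psi_i^*}{D},\qquad b_1=-\frac{2D\psi_r^*+(3D^2+1)\psi_i^*}{D},\qquad c_1=-\frac{2\psi_i^*}{D}.
\]
Expressing $\psi_r^*,\psi_i^*,D$ through $\alpha^*$ and $\gamma^*=\sqrt{\alpha^{*2}-3}$ (via $\rho^*=\rho_+(\alpha^*)$ and $F^{*2}=F_+^2(\alpha^*)$) and simplifying with Maple produces the three closed forms in the statement.

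The only genuine difficulty is to pin down the signs for $\alpha^*\in(\sqrt3,2)$. All three denominators share the factor $\Delta(\alpha^*):=\alpha^{*3}-9\alpha^*+(\alpha^{*2}+6)\gamma^*$. On $(\sqrt3,2)$ one has $\alpha^*(9-\alpha^{*2})>0$, so $\Delta(\alpha^*)<0$ is equivalent, after squaring, to $(\alpha^{*2}+6)^2(\alpha^{*2}-3)<\alpha^{*2}(9-\alpha^{*2})^2$; putting $t=\alpha^{*2}\in(3,4)$ this collapses to $(t-4)(t+1)<0$, which holds, so $\Delta<0$ on the whole interval. Since in addition $F^*>0$, $\alpha^*+\gamma^*>0$ and $\alpha^{*2}+3+\alpha^*\gamma^*>0$, the formulas immediately give $a_1=\dfrac{9F^*(\alpha^*+\gamma^*)}{2\Delta}<0$ and $c_1=-\dfrac{9F^*(\alpha^*+\gamma^*)}{\Delta}>0$. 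For $b_1$ write $b_1=-\dfrac{3F^*N(\alpha^*)}{2(\alpha^{*2}+3+\alpha^*\gamma^*)\Delta(\alpha^*)}$ with $N(\alpha^*):=2\alpha^{*5}-18\alpha^*+(2\alpha^{*4}+3\alpha^{*2}+9)\gamma^*$; since $\alpha^{*4}>9$ on $(\sqrt3,2)$, the term $2\alpha^*(9-\alpha^{*4})$ is negative while $(2\alpha^{*4}+3\alpha^{*2}+9)\gamma^*\geq0$, hence $N>0$ there and $b_1>0$. I expect this endpoint-and-squaring sign analysis to be the main thing requiring care; it also explains why $\alpha^*=\sqrt3$ is excluded, as there $N=0$ and $b_1$ degenerates, in accordance with Remark~1.
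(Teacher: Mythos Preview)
Your approach is correct and mirrors the paper's own: the paper simply writes ``the coefficients $a_1$, $b_1$ and $c_1$ can be computed as in the proof of Lemma~\ref{lemma2}'' and states the result without further detail, so your explicit reconstruction of the basis $(\zeta_0,\zeta_1,\zeta)$ for $L_-$, the adjoint vector $\zeta_1^*$, and the identities $\psi_r^*\psi_i^*=-\tfrac12$, $\psi_r^{*2}+3\psi_i^{*2}=\alpha^*$ is exactly what that instruction entails. The closed forms you obtain are precisely the negatives of those in Lemma~\ref{lemma2}, consistent with $R_-=-R_+$, and your explicit sign argument via the factor $\Delta(\alpha^*)$ and the squaring trick (reducing to $(t-4)(t+1)<0$ for $t=\alpha^{*2}\in(3,4)$) is a clean elementary justification that the paper leaves to Maple. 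One cosmetic slip: in your argument for $N>0$ you write ``the term $2\alpha^*(9-\alpha^{*4})$ is negative'', but the first summand of $N$ is $2\alpha^{*5}-18\alpha^*=2\alpha^*(\alpha^{*4}-9)$, which is \emph{positive} on $(\sqrt3,2)$; your conclusion $N>0$ is nonetheless correct.
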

Then following \cite[Chapter 4, Theorem 3.10]{haragus2010local}, we obtain the same types of solutions as in Theorem \ref{thm}, and the leading order terms of the corresponding solutions of the equation (\ref{lle}) are the same as the one found in Section \ref{o2iomega1}.

\vspace{0.3cm}
\noindent\textbf{Case \ref{case4}.} In this case $\rho^*=\rho_-(\alpha^*)= (2\alpha^*+\gamma^*)/3$ and
\[
\omega^{*2}=\frac 2 3 (\alpha^*+2\sqrt{\alpha^{*2}-3}).
\]
By arguing as before we obtain the following lemma:

\begin{lem}The coefficients $a_1$, $b_1$ and $c_1$ in the expansion of $P$ in (\ref{expp}) are given by
\begin{eqnarray*}
a_1=\frac {9F^*(\alpha^*-\gamma^*)}{2(\alpha^{*3}-9\alpha^*-(\alpha^{*2}+6)\gamma^*)} < 0,
\end{eqnarray*}

\begin{eqnarray*}
b_1=-\frac {3F^*(2\alpha^{*5}-18\alpha^*-(2\alpha^{*4}+3\alpha^{*2}+9)\gamma^*)}{2(\alpha^{*2}+3-\alpha^* \gamma^*)(\alpha^{*3}-9\alpha^*-(\alpha^{*2}+6)\gamma^*)}<0,
\end{eqnarray*}
 
\begin{eqnarray*}
c_1=-\frac{9F^*(\alpha^*-\gamma^*)}{\alpha^{*3}-9\alpha^*-(\alpha^{*2}+6)\gamma^*}
>0.
\end{eqnarray*}
\end{lem}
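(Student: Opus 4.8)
The plan is to repeat, step by step, the computation performed in the proof of Lemma~\ref{lemma2}, with $L_+$ replaced by $L_-$ and with the underlying equilibrium now satisfying $\rho^*=\rho_-(\alpha^*)=(2\alpha^*+\gamma^*)/3$ and $\omega^{*2}=4\rho^*-2\alpha^*=\frac23(\alpha^*+2\gamma^*)$, where $\gamma^*=\sqrt{\alpha^{*2}-3}$. First I would construct the basis $\left\lbrace\zeta_0,\zeta_1,\re\,\zeta,\im\,\zeta\right\rbrace$ of $\rr^4$ adapted to the $0^2(i\omega)$ spectral structure, following \cite[Chapter 4, Lemma 3.3]{haragus2010local}: $L_-\zeta_0=0$, $L_-\zeta_1=\zeta_0$, $L_-\zeta=i\omega^*\zeta$, together with the reversibility normalisation $S\zeta_0=\zeta_0$, $S\zeta_1=-\zeta_1$, $S\zeta=\overline\zeta$. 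Solving these linear systems with the explicit entries of $L_-$ gives vectors of exactly the same shape as in Section~\ref{o2iomega1}, built from a single real constant $D$, but now $2\psi_r^*\psi_i^*=-1$ (the bifurcation condition) forces $D=(3\psi_r^{*2}+\psi_i^{*2}-\alpha^*)/2=\omega^{*2}/2>0$.

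Next I would compute the adjoint vector $\zeta_1^*$ lying in the kernel of $L_-^*$, normalised by $\langle\zeta_0,\zeta_1^*\rangle=0$ and $\langle\zeta_1,\zeta_1^*\rangle=1$; as in Lemma~\ref{lemma2}, the relation $2\psi_r^*\psi_i^*+1=0$ makes $\zeta_1^*$ proportional to the fourth coordinate vector, namely $\zeta_1^*=\frac1D(0,0,0,1)^{\top}$. The three coefficients are then read off from the formulas of \cite[Chapter 4, Section 4.3.1]{haragus2010local},
\[
a_1=\langle R_-^{0,1},\zeta_1^*\rangle,\qquad
b_1=\langle R_-^{2,0}(\zeta_0,\zeta_0),\zeta_1^*\rangle,\qquad
c_1=\langle 2R_-^{2,0}(\zeta,\overline\zeta),\zeta_1^*\rangle,
\]
using the constant and quadratic Taylor coefficients $R_-^{0,1}$, $R_-^{2,0}$ recorded in Section~\ref{s1} (with the overall minus sign of the anomalous case). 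This yields $a_1=\psi_i^*/D$, $c_1=-2\psi_i^*/D=-2a_1$, and $b_1=-\bigl(2D\psi_r^*+(3D^2+1)\psi_i^*\bigr)/D$, in the same form as in Lemma~\ref{lemma2}. Substituting $\psi_r^*=F^*/(1+(\rho^*-\alpha^*)^2)$, $\psi_i^*=F^*(\rho^*-\alpha^*)/(1+(\rho^*-\alpha^*)^2)$ and $\rho^*-\alpha^*=(\gamma^*-\alpha^*)/3$, then simplifying (with the symbolic package Maple), produces the stated rational expressions in $\alpha^*$, $F^*$ and $\gamma^*$.

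The only genuinely new difficulty — as in the other lemmas of this section — is fixing the signs over the whole range $\alpha^*>\sqrt3$. The signs of $a_1$ and $c_1$ are immediate: $F^*>0$ gives $\psi_r^*>0$, while $\gamma^*<\alpha^*$ gives $\rho^*-\alpha^*<0$, hence $\psi_i^*<0$, and together with $D>0$ this forces $a_1<0$ and $c_1=-2a_1>0$. The denominator factor $\alpha^{*2}+3-\alpha^*\gamma^*$ in $b_1$ is harmless, since $\gamma^*<\alpha^*$ makes it exceed $3$. The delicate point is the sign of $b_1$ itself: here $-2\psi_r^*<0$ but $-(3D^2+1)\psi_i^*/D>0$, so the sign is not formal and must be extracted from the explicit formula, where it is governed by the numerator $2\alpha^{*5}-18\alpha^*-(2\alpha^{*4}+3\alpha^{*2}+9)\gamma^*$ and by $\alpha^{*3}-9\alpha^*-(\alpha^{*2}+6)\gamma^*$. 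I would settle each of these by isolating $\gamma^*=\sqrt{\alpha^{*2}-3}$ and squaring, keeping track of the sign of the rational remainder, which reduces the question to the positivity of an explicit polynomial in $\alpha^*$ on $(\sqrt3,\infty)$, checked directly. I expect this sign analysis of $b_1$ to be the main obstacle; it also makes transparent why the endpoint $\alpha^*=\sqrt3$, where $\gamma^*=0$ and a leading coefficient degenerates, has to be excluded, in accordance with Remark~1.
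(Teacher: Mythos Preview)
Your proposal is correct and matches the paper's own approach: the paper itself gives no separate proof for this lemma, stating only ``By arguing as before we obtain the following lemma'', so the intended argument is exactly the repetition of the computation in Lemma~\ref{lemma2} with $L_+$ replaced by $L_-$ and $\rho^*=\rho_-(\alpha^*)$, which is what you outline. Your identification $2\psi_r^*\psi_i^*=-1$ as an equivalent reformulation of the bifurcation condition $3\rho^{*2}-4\alpha^*\rho^*+\alpha^{*2}+1=0$ is a nice simplification (it is also what makes $\zeta=(1,i\omega^*,0,0)^\top$ an eigenvector and $\zeta_1^*$ proportional to $(0,0,0,1)^\top$; note that the latter additionally uses $\psi_r^{*2}+3\psi_i^{*2}-\alpha^*=0$, which also follows from the bifurcation condition). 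Your hands-on sign analysis for $b_1$ goes beyond the paper, which simply defers the signs to Maple.
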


According to \cite[Chapter 4, Theorem 3.10]{haragus2010local}, we obtain the following theorem:

\begin{theo}
\begin{enumerate}
Suppose that $\alpha^*>\sqrt 3$ and $F^{*2}=F_-^2(\alpha^*)$. In a neighborhood of $0$ in $\rr^4$ the following properties hold:
\item For any $\mu>0$ sufficiently small, there exist two families of periodic orbits of the first kind, parametrized by their size $r$, with $\dis r>r^*(\mu)=O(\mu^{\frac 1 2})$. To any periodic orbit in one of these families, there is a pair of reversible homoclinic orbits connecting the periodic orbit to itself. In addition there are also periodic orbits of the second kind and quasiperiodic orbits.
\item For any $\mu<0$ sufficiently small, the system (\ref{syst2}) possesses the same types of solutions as in Theorem \ref{thm} \ref{thmii}.
\end{enumerate}
\end{theo}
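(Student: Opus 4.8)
\vspace{0.3cm}
\noindent\textbf{Proof strategy. }The plan is to feed the sign data supplied by the preceding lemma into the general description of the reversible $0^2(i\omega)$ bifurcation in \cite[Chapter 4, Theorem 3.10]{haragus2010local}, following the scheme already used for Theorem \ref{thm}, and then to invoke \cite[Chapter 7]{lombardi2000oscillatory} for the persistence of the homoclinic connections. The only structurally new feature compared with Section \ref{o2iomega1} is that in Case \ref{case4} one has $a_1<0$, $b_1<0$, $c_1>0$: the sign of $b_1$ is now negative, so in the classification of \cite[Chapter 4, Theorem 3.10]{haragus2010local} the sides $\mu>0$ and $\mu<0$ play opposite roles to those in Theorem \ref{thm}. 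This is exactly why, on the side $\mu<0$, one recovers the picture of Theorem \ref{thm}\ref{thmii}.

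First I would analyse the truncated normal form, i.e. the system (\ref{normform}) with $\rho_B,\rho_C$ deleted, as in Section \ref{o2iomega1}: $K=|C|^2$ is a first integral, and on each level set the flow reduces to the scalar conservative equation $\diff^2A/\diff x^2=a_1\mu+b_1A^2+c_1K$, that is, motion of a particle in the cubic potential $V(A)=-(a_1\mu+c_1K)A-\tfrac{b_1}{3}A^3$. Since $b_1<0$, its equilibria $A=\pm\sqrt{-(a_1\mu+c_1K)/b_1}$ exist precisely when $a_1\mu+c_1K>0$. For $\mu<0$ this holds for every admissible $K\geqslant 0$ (recall $a_1<0$), and the resulting phase portrait --- a center and a saddle--center, two one-parameter families of first-kind periodic orbits shrinking onto these equilibria, a homoclinic loop of the reduced equation based at the saddle, second-kind periodic orbits around the center, and the persistent family of small quasiperiodic orbits --- is the one already discussed in the proof of Theorem \ref{thm}\ref{thmii}; this settles the case $\mu<0$. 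For $\mu>0$ the inequality forces $K>K^*(\mu):=-a_1\mu/c_1=O(\mu)$, so the two families of first-kind periodic orbits contain only orbits of size $r\asymp\sqrt K>r^*(\mu):=\sqrt{K^*(\mu)}=O(\mu^{1/2})$; for each such $K$ the potential $V$ again has a nondegenerate local maximum (a saddle of the reduced equation) encircled by a homoclinic loop, which gives the structure claimed for $\mu>0$, up to the question of persistence. The quasiperiodic orbits come, as usual, from the product structure of the linearization near a center (a pair $\pm ik$ from the reduced $(A,B)$ block together with $\pm i\omega^*$ from the $C$ block, generically non-resonant).

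The step I expect to be the main obstacle is the persistence of these homoclinic loops as genuine solutions of the full normal form (\ref{normform}), hence of (\ref{syst2}). Here I would reproduce the argument of Section \ref{o2iomega1}: the homoclinic orbits of the truncated system connecting a first-kind periodic orbit to itself form a one-parameter family indexed by a phase $\varphi\in\rr/2\pi\zz$, and, using the reversibility of (\ref{syst2}) with respect to $S$ together with \cite[Chapter 7, Section 7.3]{lombardi2000oscillatory}, the two reversible members $\varphi=0,\pi$ persist as long as the underlying periodic orbit is not too small. For $\mu>0$ this condition is met for all orbits of the families at hand: unlike in Theorem \ref{thm}, these families do not accumulate on an equilibrium, their size being bounded below by $r^*(\mu)>0$. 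By contrast, the limiting object $K\to0$, which would be a genuine homoclinic orbit to an equilibrium (the $\mathrm{sech}^2$ profile of the reduced equation), does not persist, by \cite[Chapter 7, Section 7.4]{lombardi2000oscillatory}, in agreement with the fact that only homoclinic connections to periodic orbits are asserted. Finally, substituting each of these orbits into the change of variables (\ref{chvar2}) and expanding in the basis $\{\zeta_0,\zeta_1,\re\,\zeta,\im\,\zeta\}$ produces the corresponding stationary solutions of (\ref{lle}), whose leading-order expansions are obtained exactly as in Section \ref{o2iomega1}.
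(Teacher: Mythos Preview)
Your proposal is correct and follows the same approach as the paper: the paper's entire ``proof'' of this theorem is the single sentence ``According to \cite[Chapter 4, Theorem 3.10]{haragus2010local}, we obtain the following theorem'', applied with the sign data $a_1<0$, $b_1<0$, $c_1>0$ from the preceding lemma. Your write-up simply unpacks what that citation entails --- the analysis of the truncated normal form via the first integral $K=|C|^2$ and the cubic potential, the identification of the saddle/center structure on each level set, and the appeal to \cite[Chapter 7]{lombardi2000oscillatory} for the persistence of the reversible homoclinic connections to periodic orbits --- exactly as was done in Section \ref{o2iomega1}, so there is no methodological difference.
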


\noindent\textbf{Solutions. }We start with the truncated system (\ref{trunciomega2}) and keep the notations of Section \ref{o2iomega1}. First, suppose that $\mu<0$. 
The system (\ref{trunciomega2}) admits two equilibria $(\pm A_0, 0,0,0)$. Notice that $( A_0, 0,0,0)$ is a center, while $(-A_0, 0,0,0)$ is a saddle. The formulas for the corresponding constant solutions of the equation (\ref{lle}) are the same as in Section \ref{o2iomega1}. 

Next, the formulas for the solutions of (\ref{lle}) corresponding to the periodic orbits of the first kind of the system (\ref{syst2}) are the same as in Section \ref{o2iomega1}.
The center $( A_0, 0,0,0)$ is surrounded by a one-parameter family of periodic orbits of the second kind. As previously, an expansion of the solutions which are close to the center can be computed. The corresponding solutions of the equation (\ref{lle}) have the form
\[
\psi_{\mu,\varepsilon}(x)=P_2(kx),
\]
where 
\[
P_2(y)=\psi^*+(1+Di)\left(\sqrt{\frac{a_1\mu}{b_1}}+\varepsilon \cos\left(y \right) \right)|\mu|^{\frac 1 2}+O(\mu+\varepsilon(\varepsilon+\mu))
\]
with $k=\sqrt 2 (-a_1b_1\mu)^{\frac 1 4}+O(|\mu|^{\frac 1 2})$ and $\varepsilon$ is a real and small parameter.

Finally, the solutions of (\ref{lle}) which correspond to the pair of reversible homoclinic connections to periodic orbits have the form

\[
\psi_{\mu, K, \pm}(x)=\psi^*-(1+iD)\sqrt{\frac{-a_1 \mu -c_1 K}{b_1}}\left(1-3\,{\rm{sech}}^2\left(\delta_K x\right)\right)\pm2\sqrt K \cos(kx)+O(\mu),
\]
where $k=\omega^*+O(|\mu|^{\frac 1 2})$ and $\delta_K=\sqrt{-b_1 A_K/2}$.

Now suppose that $\mu >0$. The formulas for the solutions of (\ref{lle}) corresponding to the periodic orbits of the first kind of the system (\ref{syst2}) are the same as in Section \ref{o2iomega1}. 
The solutions of (\ref{lle}) which correspond to the pair of homoclinic connections to periodic orbits have the form
\[
\psi_{\mu, K,\pm}(x)=\psi^*+(1+iD)A_K^{1,2}\left(1-3\,{\rm{sech}}^2\left(\delta_K x\right)\right)\pm 2\sqrt K \cos(kx)+O(\mu),
\]
where $k=\omega^*+O(\mu^{\frac 1 2})$, $A_K^1=A_K$, $A_K^2=-A_K$ and $\delta_K=\sqrt{|b_1 A_K|/2}$.
\section{Reversible $0^2$ bifurcation}\label{s4}
In this section we analyse the $0^2$ bifurcation in the cases of normal and anomalous dispersion. This analysis relies upon a center manifold reduction. We prove the existence of periodic and homoclinic solutions.

\subsection{Case of normal dispersion ($\beta=1$)}\label{o21}

Consider the $0^2$ bifurcation, which occurs in the case $\beta=1$ in the two cases
\begin{enumerate}
\item \label{case1}$ \alpha ^* \in \left(\sqrt 3 , 2\right)$ and $\dis F^{*2}=F_+ ^2 (\alpha^*)$  
\item \label{case2}$ \alpha^* >\sqrt 3$ and $\dis F^{*2}=F_- ^2 (\alpha^*)$, 
\end{enumerate}
\textit{i.e.} the point $(\alpha^*,F^{*2})$ lies on the continuous line in Figure \ref{diagram} (case $\beta=1$). The case $\alpha^*=\sqrt 3$ is excluded here, because at this point one of the coefficient of the normal form computed hereafter vanishes.
In both cases 0 is an non-semisimple eigenvalue of $L_+$ with algebraic multiplicity $2$ and $L_+$ has no other purely imaginary eigenvalue. Indeed, the two nonzero eigenvalues $\pm \omega^*$ of $L_+$ satisfy, in Case \ref{case1},
\[
\omega^{*2}=\frac{2}{3}(\alpha^{*2}-2\sqrt{\alpha^{*2}-3})>0,
\]
and in Case \ref{case2},
\[
\omega^{*2}=\frac{2}{3}(\alpha^{*2}+2\sqrt{\alpha^{*2}-3})>0.
\]
The analysis of this bifurcation relies upon a center manifold reduction. 

\vspace{0.3cm}
\noindent\textbf{Center manifold reduction. }Let $\zeta_0$ and $\zeta_1$ be two generalized eigenvectors of $L_+$, satisfying
\[
L_+\zeta_0=0,\quad L_+ \zeta_1=\zeta_0, \quad S\zeta_0=\zeta_0,\quad S\zeta_1=-\zeta_1.
\]
A straightforward computation gives
\[
\zeta_0=\begin{pmatrix}
1\\0 \\D \\0
\end{pmatrix}, \quad
\zeta_1=\begin{pmatrix}
0\\ 1 \\ 0 \\ D
\end{pmatrix},\quad D=\frac{3\psi_r^{*2}+\psi_i^{*2}-\alpha^*}{1-2\psi_r^* \psi_i^*}=-\frac{\omega^{*2}}{2}.
\]

Consider the spectral decomposition $\mathbb R^4=X_0 \oplus X_1$, with \[X_0={\mathrm{span}}(\zeta_0, \zeta_1), \quad X_1=({\mathrm{id}}-P_0)(\mathbb R ^4),\] where $P_0$ is the unique projection onto $X_0$ which commutes with $L_+$.
Recall that $R_+$ is of class  $\mathcal C ^{\infty}$, and $R_+(0,0)=0$, ${\mathrm{d}}_U R_+(0,0)=0$. According to the center manifold theorem (see for example \cite[Chapter 2, Theorem 3.3]{haragus2010local}), there exist a map $\Psi\in \mathcal C^k (X_0\times \rr, X_1)$, for any integer $k$, satisfying
\[
\Psi(0,0)=0,\quad \diff_U \Psi(0,0)=0,
\]
and a neighborhood $O_1 \times O_2$ of $(0,0)$ in $\rr^4 \times \rr$ such that for $\mu \in O_2$, the manifold
\[
\mathcal M (\mu)=\left\lbrace U_0+\Psi(U_0, \mu),\  U_0\in X_0 \right\rbrace
\]
is locally invariant, and contains the set of bounded solutions of the system (\ref{syst2}) staying in $O_1$ for all $x\in \rr$.
Consequently, any small bounded solution $U$ of (\ref{syst2}) has the form

\[U=U_0+\Psi(U_0,\mu),\quad U_0=P_0 U,\quad \Psi(U_0,\mu)=(\id-P_0)U,
\]
with \[U_0(x)=A(x)\zeta_0+B(x) \zeta_1,\quad\Psi(U_0,\mu)=O(|\mu|+\left \|U_0 \right \|(|\mu|+\left\|U_0\right \|)).
\]
where $A,\ B$ are real-valued functions. 
Moreover, according to \cite[Chapter 2, Corollary 2.12]{haragus2010local}, the function $U_0$ satisfies the reduced system 
\begin{equation}
\frac{\diff U_0}{\diff x}=L_{+,0} U_0 +P_0 R_+(U_0+\Psi(U_0,\mu),\mu),\label{systreduit}
\end{equation}   
where $L_{+,0}$ is the restriction of $L_+$ to $X_0$. Our goal is now to compute the system (\ref{systreduit}).

In the basis $\left(\zeta_0, \ \zeta_1\right )$ of $X_0$ the operator $L_{+,0}$ is represented by the $2\times 2$ matrix 
\[
L_{+,0}=\begin{pmatrix}
0 & 1 \\ 0 & 0
\end{pmatrix}.
\]
Next, in order to compute $P_0$, we consider the vectors $\zeta_0^*, \ \zeta_1^*$ satisfying
\[
\left\langle \zeta_1^*,\zeta_0\right\rangle =0,\quad \left\langle \zeta_1^*,\zeta_1\right\rangle =1,\quad L_+^*\zeta_1^*=0,\quad L_+^*\zeta_0^*=\zeta_1^*,
\]
where $L_+^*$ is the adjoint operator of $L_+$. A direct computation provides
\[
\zeta_0^*=\frac{1}{D}\begin{pmatrix}0\\0\\1\\0\end{pmatrix},\qquad \zeta_1^*=\frac{1}{D}\begin{pmatrix}0\\ 0 \\ 0 \\1\end{pmatrix}.
\]
Then the projector $P_0$ is given, for any $V\in \rr^4$, by the formula
\[
P_0 V=\langle V,\zeta_0^*\rangle \zeta_0+\langle V,\zeta_1^*\rangle \zeta_1.
\] 
Consequently,
\[
P_0 R_+(U_0+\Psi(U_0,\mu),\mu)=\left\langle R_+(U_0+\Psi(U_0,\mu),\mu),\zeta_0^* \right\rangle \zeta_0+\left\langle R_+(U_0+\Psi(U_0,\mu),\mu),\zeta_1^* \right\rangle \zeta_1.
\]
Since the second and the fourth component of $\zeta_0^*$ are zero, we have
\[
\left\langle R_+(U_0+\Psi(U_0,\mu),\mu),\zeta_0^* \right\rangle=0,
\]
so that
\[
P_0 R_+(U_0+\Psi(U_0,\mu),\mu)=\left\langle R_+(U_0+\Psi(U_0,\mu),\mu),\zeta_1^* \right\rangle \zeta_1.
\]
Furthermore, since $\dis \Psi(U_0,\mu)=O(\left \|U_0 \right \|(|\mu|+\left\|U_0\right \|))$ and $R_+(U,\mu)=O(|\mu|+\| U\|^2)$, we obtain
\[P_0 R_+(U_0+\Psi(U_0,\mu),\mu)=\left\langle R_+(U_0,\mu),\zeta_1^* \right\rangle \zeta_1+O(\left \|U_0 \right \|(|\mu|+\left\|U_0\right \|^2)).
\]
Using the expression of $R_+$, we have
\begin{eqnarray*}
R_+(U_0,\mu)&=&R_+(A\zeta_0+B\zeta_1,\mu)\\&=&\begin{pmatrix}
0 \\ -\mu \psi_r^* +((D^2+3)\psi_r^*+2D\psi_i^*)A^2 \\ 0 \\ -\mu\psi_i^* +(2D\psi_r^*+(3D^2+1)\psi_i^*)A^2
\end{pmatrix}+O(\left \|U_0 \right \|(|\mu|+\left\|U_0\right \|^2)).
\end{eqnarray*}
Finally, the reduced system (\ref{systreduit}) reads
\begin{eqnarray}
\frac{\diff A}{\diff x}&=&B\label{redsyst1}\\
\frac{\diff B}{\diff x}&=&a\mu+bA^2+O\left(\left(|A|+|B| \right)\left(|\mu|+\left(|A|^2+|B|^2\right)\right)\right),\label{redsyst2}
\end{eqnarray}
with
\[a=-\frac{\psi_i ^*}{D},\qquad b=\frac{2D\psi_r^*+(3D^2+1)\psi_i^*}{D}.\]

Notice that the system the system (\ref{redsyst1})-(\ref{redsyst2}) is already in normal form at order 2. According to \cite[Chapter 4, Section 4.1.1, Theorem 1.8]{haragus2010local}, its dynamics depends on the signs of the coefficients $a$ and $b$, provided that they do not vanish.

\vspace{0.3cm}
\noindent\textbf{Case \ref{case1}.} We suppose that $\alpha^* \in (\sqrt 3 , 2)$ and $F^{*2}=F^2_+(\alpha^*)$, so that $\rho^*=\rho_+(\alpha^*)$. A direct computation, using the symbolic package Maple, gives
\begin{eqnarray*}
a=-\frac {9F^*(\alpha^*+\gamma^*)}{2(\alpha^{*3}-9\alpha^*+(\alpha^{*2}+6)\gamma^*)}
>0,
\end{eqnarray*}

\begin{eqnarray*}
b=\frac {3F^*(2\alpha^{*5}-18\alpha^*+(2\alpha^{*4}+3\alpha^{*2}+9)\gamma^*)}{2(\alpha^{*2}+3+\alpha^* \gamma^*)(\alpha^{*3}-9\alpha^*+(\alpha^{*2}+6)\gamma^*)}
<0.
\end{eqnarray*}
We therefore have the following result (see \cite{haragus2010local}, Chapter 4, Theorem 1.8), which is valid in a neighborhood of the origin in $\rr^2$, for small $\mu$:

\begin{theo}\label{thm02}
\begin{enumerate}
\item For $\mu>0$ sufficiently small, the system (\ref{syst2}) possesses two equilibria of order $O(\mu^{\frac 1 2})$, a saddle and a center. The center is surrounded by a one-parameter family of periodic orbits, which tend to a homoclinic orbit connecting the saddle equilibrium to itself, as the period tends to $\infty$.
\item For $\mu<0$ sufficiently small, the system (\ref{syst2}) has no bounded solution. 
\end{enumerate}
\end{theo}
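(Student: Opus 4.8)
The plan is to deduce the statement from the center manifold reduction already carried out together with the general analysis of the reversible $0^2$ bifurcation in \cite[Chapter 4, Section 4.1.1, Theorem 1.8]{haragus2010local}. By the center manifold theorem invoked above, every small bounded solution of (\ref{syst2}) lies on $\mathcal M(\mu)$, and its $X_0$-component solves the reduced system (\ref{redsyst1})--(\ref{redsyst2}); conversely, every small bounded solution of the reduced system lifts, through $U=U_0+\Psi(U_0,\mu)$, to a bounded solution of (\ref{syst2}). The reduced system inherits the reversibility, the symmetry $S$ acting on $X_0$ as $(A,B)\mapsto(A,-B)$, so $\mathrm{Fix}(S)=\{B=0\}$. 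Hence it suffices to describe the small bounded orbits of (\ref{redsyst1})--(\ref{redsyst2}) in a neighbourhood of the origin in $\rr^2$, and then to transport them back to $\rr^4$.

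Since (\ref{redsyst1})--(\ref{redsyst2}) is already in normal form at order $2$, and since the Maple computation above yields $a>0$ and $b<0$ in Case \ref{case1}, I would first analyse the truncated integrable system $\diff A/\diff x=B$, $\diff B/\diff x=a\mu+bA^2$, which is Hamiltonian with energy $H(A,B)=\tfrac12 B^2-a\mu A-\tfrac{b}{3}A^3$. For $\mu>0$ one has $-a\mu/b>0$, so there are exactly two equilibria $(\pm A_0,0)$ with $A_0=\sqrt{-a\mu/b}=O(\mu^{1/2})$; linearising $\diff B/\diff x$ gives eigenvalues $\pm\sqrt{2bA_0}$ at $(A_0,0)$ (purely imaginary, a center) and $\pm\sqrt{-2bA_0}$ at $(-A_0,0)$ (real, a saddle). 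A direct computation shows $H(A_0,0)<H(-A_0,0)$, so the level sets of $H$ near the center are closed curves, producing a one-parameter family of periodic orbits which grow with the energy and converge, as the energy approaches the saddle value (their period tending to infinity), to the homoclinic loop of the saddle, whose profile is of $\mathrm{sech}^2$ type. For $\mu<0$, instead, $a\mu+bA^2<0$ for all $A$, hence $\diff B/\diff x<0$ identically, and no solution stays bounded.

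The crux is the persistence of this phase portrait for the full reduced system, i.e. after restoring the $O((|A|+|B|)(|\mu|+|A|^2+|B|^2))$ remainder. The equilibria persist by the implicit function theorem, and the elliptic/hyperbolic nature of their linearisations is preserved. The periodic orbits surrounding the center persist because the small $S$-symmetric orbits form a family transverse to $\{B=0\}$, each determined by its two intersections with $\mathrm{Fix}(S)$, so a shooting/continuity argument in this reversible two-dimensional setting produces a whole interval of them. The delicate point is the homoclinic orbit: under a generic (non-reversible) perturbation a connection to a saddle is of codimension one and need not survive, but here the unstable manifold $W^u$ of the saddle is one-dimensional and is carried by $S$ onto the stable manifold $W^s$, so any intersection of $W^u$ with $\{B=0\}$ yields an $S$-symmetric homoclinic orbit, and the existence of such an intersection follows by continuity from the truncated system. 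This is exactly the content of \cite[Chapter 4, Theorem 1.8]{haragus2010local}, which I would invoke directly once the signs $a>0$, $b<0$ have been recorded; transporting the resulting equilibria, periodic orbits and homoclinic orbit back through $U=U_0+\Psi(U_0,\mu)$ then gives the corresponding bounded solutions of (\ref{syst2}) near the origin in $\rr^4$, and for $\mu<0$ the absence of bounded solutions on $\mathcal M(\mu)$ gives the absence of small bounded solutions of (\ref{syst2}), which proves the theorem.
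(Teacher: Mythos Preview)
Your proposal is correct and follows essentially the same route as the paper: after the center manifold reduction and the computation of the signs $a>0$, $b<0$, the paper simply invokes \cite[Chapter 4, Theorem 1.8]{haragus2010local}, which is exactly what you do. Your additional discussion of the truncated Hamiltonian phase portrait and of persistence via reversibility is a faithful unpacking of the content of that reference, not a different argument.
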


\noindent\textbf{Solutions. }As before we can compute the leading order terms in the Taylor expansions of the corresponding solutions of the equation (\ref{lle}). Consider the truncated system
\begin{eqnarray}
\frac{\diff A}{\diff x}&=&B\label{troncO2} \\
\frac{\diff B}{\diff x}&=&a\mu+bA^2.\label{troncO21}
\end{eqnarray}

For $\mu>0$, the system (\ref{troncO2})-(\ref{troncO21}) possesses two equilibria $(\pm A_0,0)$, where
$A_0=\sqrt{-a\mu/b}$. 
The equilibrium $(A_0,0)$ is a center and $(-A_0,0)$ is a saddle. These equilibria correspond to constant solutions of the equation (\ref{lle}) with corresponding parameters $\alpha^*+\mu$ and $F^*$, and read
\[
\psi_\mu=\psi^*\pm \sqrt{\frac{-a}{b}}(1+iD)\mu+O(\mu).
\]
Next, the small periodic solutions $(A,B)$ of (\ref{troncO2})-(\ref{troncO21}) which are close to the center have the form
\[
A(x)=A_0+\varepsilon \sqrt{\mu}\cos\left(\sqrt 2 (-ab\mu)^{\frac 1 4}x \right)+O(\varepsilon ^2 ),\quad B(x)=\frac{\diff A}{\diff x},
\]
where $\varepsilon$ is a real and small parameter. The corresponding solutions of (\ref{lle}) have the form
\[
\psi_{\mu,\varepsilon}(x)=P(kx),\]
where $P$ is the $2\pi-$ periodic function defined by
\[
P(y)=\psi^*+(1+Di)\left(\sqrt{\frac{-a }{b}}+\varepsilon \cos\left(y \right) \right)\mu^{\frac 1 2}+O(\mu+\varepsilon(\varepsilon+\mu)),
\]
and $k=\sqrt 2 (-ab\mu)^{\frac 1 4}+O(\mu^{\frac 1 2})$.

Finally, the homoclinic solution $(A,B)$ of the system (\ref{troncO2})-(\ref{troncO21}) is given by
\[
A(x)=-A_0 \left(1-3 \, {\rm{sech}}^2 (\delta x)\right), \quad B(x)=\frac{\diff A}{\diff x},
\]
where \[\delta = \sqrt{\frac{-b A_0}{2}}.\]
The corresponding solution of (\ref{lle}) reads
\[
\psi_{\mu}(x)=\psi^*-\sqrt{\frac{-a}{b}}(1+Di)\left(1-3\, {\rm{sech}}^2(\delta x) \right)\mu^{\frac 1 2}+O(\mu).
\]

\vspace{0.3cm}
\noindent\textbf{Case \ref{case2}.} We suppose now that $\alpha^* > \sqrt 3$ and $F^{*2}=F_-^2(\alpha^*)$, so that $\rho^*=\rho_-(\alpha^*)$. In this case the coefficients $a$ and $b$ are given by
\begin{eqnarray*}
a=-\frac {9F^*(\alpha^*-\gamma^*)}{2(\alpha^{*3}-9\alpha^*-(\alpha^{*2}+6)\gamma^*)}
>0
\end{eqnarray*}
and 
\begin{eqnarray*}
b=\frac {3F^*(2\alpha^{*5}-18\alpha^*-(2\alpha^{*4}+3\alpha^{*2}+9)\gamma^*)}{2(\alpha^{*2}+3-\alpha^* \gamma^*)(\alpha^{*3}-9\alpha^*-(\alpha^{*2}+6)\gamma^*)}
>0.
\end{eqnarray*}

Then we have the following theorem:
\begin{theo}
\begin{enumerate} 
\item For $\mu>0$ sufficiently small, the system (\ref{syst2}) has no bounded solution.
\item For $\mu<0$ sufficiently small, the system (\ref{syst2}) possesses two equilibria of order $O(|\mu|^{\frac 1 2})$, a saddle and a center. The center is surrounded by a one-parameter family of periodic orbits, which tend to a homoclinic orbit connecting the saddle equilibrium to itself, as the period tends to~$\infty$.
\end{enumerate}
\end{theo}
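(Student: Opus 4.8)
The strategy is to reduce everything to the two-dimensional system already obtained in this section. The center manifold theorem applied to (\ref{syst2}), carried out above, shows that every small bounded solution of (\ref{syst2}) lies on the locally invariant manifold $\mathcal M(\mu)$, so that $U = U_0 + \Psi(U_0,\mu)$ with $U_0 = A\zeta_0 + B\zeta_1$, $A, B$ real-valued, and $(A,B)$ solves the reduced system (\ref{redsyst1})-(\ref{redsyst2}), which is already in normal form at order $2$. By \cite[Chapter 4, Section 4.1.1, Theorem 1.8]{haragus2010local} the local dynamics of that system is entirely determined by the signs of the coefficients $a$ and $b$, provided neither of them vanishes, so the proof reduces to identifying those signs in the present parameter range and then reading off the corresponding phase portrait, exactly as in the proof of Theorem \ref{thm02}.

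First I would record that, for $\alpha^* > \sqrt 3$ and $F^{*2} = F_-^2(\alpha^*)$, the explicit expressions of $a$ and $b$ displayed just above (obtained with the symbolic package Maple) satisfy $a > 0$ and $b > 0$, in contrast with Case \ref{case1}, where $b < 0$. Hence the equilibria of the truncated system $\diff A/\diff x = B$, $\diff B/\diff x = a\mu + bA^2$, namely $B = 0$ and $A^2 = -a\mu/b$, are real precisely when $\mu < 0$, and then $A_0 := \sqrt{-a\mu/b} = O(|\mu|^{1/2})$; linearising at $(\pm A_0, 0)$ gives the eigenvalues $\pm\sqrt{2bA}$, so that $(A_0,0)$ is a saddle and $(-A_0,0)$ a center. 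Since the truncated system is reversible, and in fact Hamiltonian with energy $\tfrac12 B^2 - a\mu A - \tfrac b3 A^3$, the center is enclosed by a one-parameter family of symmetric periodic orbits, each meeting $\mathrm{Fix}(S) = \{B = 0\}$ transversally at two points, which grow monotonically and converge, as the period tends to $\infty$, onto the unique symmetric homoclinic orbit to the saddle. When $\mu > 0$ there are no equilibria and the truncated flow has no bounded orbit near the origin.

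The remaining point is the passage from the truncated system to the full reduced system (\ref{redsyst1})-(\ref{redsyst2}), that is, the control of the higher-order terms, and this is the step one might expect to be the main obstacle. It is, however, precisely where the $0^2$ bifurcation is \emph{easier} than the $0^2(i\omega)$ bifurcation of Section \ref{o2iomega1}: the stable and unstable manifolds of the saddle are one-dimensional, and the anticommuting symmetry $S$ forces $W^u = S W^s$, so that their intersection along $\mathrm{Fix}(S)$ is robust and a genuine symmetric homoclinic orbit persists, with no exponentially small splitting of the type studied in \cite{lombardi2000oscillatory}. The symmetric periodic orbits persist likewise by a shooting argument on $\mathrm{Fix}(S)$, and for $\mu > 0$ the center manifold, which carries all small bounded solutions and no recurrent orbit, yields none. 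All of this is the content of \cite[Chapter 4, Theorem 1.8]{haragus2010local}, which I would invoke directly: since $a > 0$, the sign of $a\mu$ is that of $\mu$, so its $\mu > 0$ versus $\mu < 0$ alternative becomes statements (i) and (ii) of the theorem, the side of the bifurcation being the opposite of Theorem \ref{thm02} because $b$, and hence the ratio $a/b$, has changed sign. Translating back via $U = U_0 + \Psi(U_0,\mu)$ and $\psi = \psi^* + \widetilde{\psi}$ then recasts these equilibria, periodic orbits and the homoclinic orbit as constant, periodic and localized stationary solutions of (\ref{lle}). The genuine work is thus only bookkeeping: verifying the two sign conditions and checking that we are in the regime covered by the abstract theorem; there is no hard analytic step, since for this bifurcation the persistence issue is automatic.
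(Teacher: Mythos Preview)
Your proposal is correct and follows essentially the same approach as the paper: after the center manifold reduction already carried out in this section, the paper simply records that in Case~\ref{case2} one has $a>0$ and $b>0$ and then invokes \cite[Chapter 4, Theorem 1.8]{haragus2010local}, exactly as you do. Your write-up is in fact more detailed than the paper's (you spell out the phase portrait of the truncated system, the Hamiltonian structure, and why persistence of the homoclinic is automatic via reversibility), but all of this is implicit in the cited abstract theorem and is consistent with it.
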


\noindent\textbf{Solutions. }We consider the truncated system (\ref{troncO2})-(\ref{troncO21}). For $\mu <0$, this system possesses two equilibria $(\pm A_0,0)$, where $A_0=\sqrt{-a\mu/b}$. In this case $(A_0,0)$ is a saddle and $(-A_0,0)$ is a center. These equilibria correspond to constant solutions of the equation (\ref{lle}) with corresponding parameters $\alpha^*+\mu$ and $F^*$, and read
\[
\psi=\psi^*\pm \sqrt{\frac{a}{b}}(1+Di)|\mu|^{\frac 1 2}+O(\mu).
\] 
Moreover, as in Case \ref{case1}, the equation (\ref{lle}) has a family of periodic solutions. The solutions which are close to $\psi^*$ have the form 
\[
\psi_{\mu,\varepsilon}(x)=P(kx), \]
where $k=\sqrt 2 (-ab\mu)^{\frac 1 4}+O(|\mu|^{\frac 1 2})$, and the $2\pi-$periodic function $P$ is given by
\[
P(y)=\psi^*+(1+Di)\left(-\sqrt{\frac{a }{b}}+\varepsilon \cos\left(y \right) \right)|\mu|^{\frac 1 2}+O(\mu+\varepsilon(\varepsilon+\mu)),
\]
in which $\varepsilon$ is a real and small parameter. The equation (\ref{lle}) also have a homoclinic solution, which reads
\[
\psi_{\mu}(x)=\psi^*+(1+Di)\sqrt{\frac{a}{b}}\left(1-3\ {\rm{sech}}^2(\delta x) \right)|\mu|^{\frac 1 2}+O(\mu),
\]
where $\delta=\sqrt{bA_0 /2}$.

\subsection{Case of anomalous dispersion ($\beta=-1$)}
We keep the notations of Section \ref{o21}. Consider the $0^2$ bifurcation, which occurs for $\dis \alpha^*>2$ and $\dis F^{*2}=F_+ ^2(\alpha^*)$, \textit{i.e.}, the point $(\alpha^*,F^{*2})$ lies on the continuous line on Figure \ref{diagram} (case $\beta=-1$). In this case we have $\rho^*=\rho_+(\alpha^*)$. As in the previous section, the study of this bifurcation also requires a center manifold reduction. The reduced system has the same form as the one found in \ref{o21}, and the coefficients at leading order are given by
\begin{eqnarray*}
a=\frac {9F^*(\alpha^*+\gamma^*)}{2(\alpha^{*3}-9\alpha^*+(\alpha^{*2}+6)\gamma^*)}>0
\end{eqnarray*}
and 
\begin{eqnarray*}
b= -\frac {3F^*(2\alpha^{*5}-18\alpha^*+(2\alpha^{*4}+3\alpha^{*2}+9)\gamma^*)}{2(\alpha^{*2}+3+\alpha^* \gamma^*)(\alpha^{*3}-9\alpha^*+(\alpha^{*2}+6)\gamma^*)}<0.
\end{eqnarray*}
In this case we obtain the same types of solutions as in Theorem \ref{thm02} and the same formulas for the corresponding solutions of the equation (\ref{lle}) as in Section \ref{o21}, Case \ref{case1}.

\section{Discussion}

\noindent\textbf{Time periodic solutions. }Our analysis confirms the existence of several types of steady solutions of the equation (\ref{lle}), which have been experimentally and numerically observed (see \cite{godey2014stability} and the references therein). For example, several families of periodic solutions are found for different values of the parameters $\alpha$ and $F$, and dark solitons, with a shape corresponding to homoclinic solutions found in the $0^2$ bifurcation, are also observed.

Particular solutions which are periodic in time, such as breathers, have been experimentally and numerically observed \cite{godey2014stability}. There are no time periodic solutions bifurcating from constant solutions. Indeed, one can perform the same analysis and find that the only small solutions are time-independant. A possible way
of finding time periodic solutions could be through a study of secondary bifurcations.

\vspace{0.3cm}
\noindent\textbf{Stability.}
The spectral stability of constant solutions of the equation (\ref{lle}) is well-known \cite{godey2014stability}. Straightforward computations prove that when the equation (\ref{lle}) has one constant solution, this one is spectrally stable, while when it possesses three constant solutions $\psi_1$, $\psi_2$ and $\psi_3$, with $\rho_1<\rho_2<\rho_3$, the constant solutions $\psi_1$, $\psi_3$ are spectrally stable, and $\psi_2$ is spectrally unstable.

Concerning non-constant solutions, the only existing results concern a family of periodic waves, obtained in a neighborhood of $(\rho,\alpha) =(1,41/30)$ in the case $\beta=-1$. More precisely, these solutions are nonlinearly stable if $\alpha <41/30$ and nonlinearly unstable if $\alpha >41/30$ (see \cite{miyaji2010bifurcation}). Moreover, it is proved in \cite{miyaji2011stability}, using a Strichartz estimate, that this family is orbitaly stable with respect to $L^2$ perturbations for $\alpha<41/30$.

The question of the stability for other types of solutions, in particular for the stationary solutions found in our analysis, is widely open.

\end{document}